\documentclass[11pt,reqno,makeidx]{amsart}
\usepackage[active]{srcltx}
\usepackage{amssymb,color}
\usepackage{float}
\restylefloat{table}

\usepackage{ragged2e}
\textwidth=14.9cm \textheight=23cm \topmargin=0cm
\oddsidemargin=0.5cm \evensidemargin=0.5cm
\usepackage{array}
\usepackage[active]{srcltx}
\usepackage{epsfig}
\usepackage{hyperref}
\usepackage{amsmath}
\usepackage{amssymb}
\usepackage{hyperref}
\usepackage{enumerate}
\newtheorem{Proposition}{Proposition}
  \newtheorem{Remark}[Proposition]{Remark}
  
  \newtheorem{Lemma}[Proposition]{Lemma}
  
  \newtheorem{Theorem}[Proposition]{Theorem}
 
 \newtheorem{Definition}[Proposition]{Definition}
 \newtheorem{Note}[Proposition]{Note}

\newcommand {\z}{{\noindent}}

\def\Le{\leqslant}
\def\Ge{\geqslant}
 
\def\CC{\mathbb{C}}

 \def\RR{\mathbb{R}}
 \def\NN{\mathbb{N}}
\def\ZZ{\mathbb{Z}}

\def\Re{\mathrm{Re\,}}

\def\P1{P$_{\rm I}$}
\def\3t{_{3t}}

\def\JN{ J}
\def\LN{ L}
\def\newD6p{\mathbb{D}_{21/4}^+}
 \def\({\left(} \def\){\right)} \makeindex

\def\h,{\negthinspace}

\def\z{\noindent}
\def\be{\begin{equation}}
\def\bel{\begin{equation}\label}
\def\ee{\end{equation}}

\def\lam{\lambda}
\def\bd{\begin{Definition}}
\def\ed{\end{Definition}}

\def\bp{\begin{Proposition}}
\def\bpl{\begin{Proposition}\label}
\def\ep{\end{Proposition}}

\def\bl{\begin{Lemma}}
\def\el{\end{Lemma}}
\def\bt{\begin{Theorem}}
\def\et{\end{Theorem}}

\def\bfy{\mathbf{y}}

\def\bfk{\mathbf{k}}

\def\bfs{\mathbf{s}}

\def\lam{\lambda}

\def\oldC0{c_2}
\def\oc1{c_3}
\def\oldc2{c_4}
\def\ogc1{c_5}
\def\olgoc2{c_6}

\def\hsig{{h^\sigma}}

\author{O. Costin, R.D. Costin and M. Huang} \address{Mathematics
  Department\\The Ohio State University\\Columbus, OH 43210}
\address{Mathematics Department\\The Ohio State University\\Columbus, OH
  43210} \address{Mathematics Department\\The University of Chicago, IL 60637
} \title
{Tronqu\'ee solutions of the Painlev\'e equation \P1}
\begin{document}
$ $ \vskip -2cm
\gdef\shorttitle{Tronqu\'ee solutions of \P1}
\maketitle
\today

\definecolor{darkmagenta}{rgb}{.5,0,.5}

\begin{abstract}
  We analyze the one parameter family of tronqu\'ee solutions of the Painlev\'e equation  \P1 in the pole-free sectors together with the region of the first array of poles. We find a convergent expansion for these solutions, containing one free parameter  multiplying exponentially small corrections to the Borel summed power series. We link the position of the poles in the first array to the free parameter, and find the asymptotic expansion of  the  pole positions in this first array (in inverse powers of the independent variable). We show that the tritronqu\'ees are given by the condition that the parameter be zero. We show how this analysis in conjunction with the asymptotic study of the pole sector of the tritronqu\'ee in \cite{inprep} leads to a closed form expression for the Stokes multiplier directly from the Painlev\'e property, not relying on isomonodromic or related type of results.
\end{abstract}

\section{Introduction}

\subsection{ The Painlev\'e equation \P1 and its tronque\'e solutions}\label{sec1}

The Painlev\'e
equation \P1
\begin{equation}\label{p1}
y''= 6y^2 + z
\end{equation}
has a five-fold symmetry: if $y(z)$ solves \eqref{p1}, then so
does  $\rho^2y(\rho z)$ if $\rho^5=1$.

Relatedly, there are five special directions of \eqref{p1} (see also Note\,\ref{SpecDir}) which border the sectors
\begin{equation}
  \label{eq:sectorsz}
  S_k=\left\{z\in\CC\, \Big|\, \frac{2k-1}{5}\pi
<\arg z<\frac{2 k+1}{5}\pi \right\},\ k\in\ZZ_5
\end{equation}
 Generic solutions
have poles accumulating at $\infty$ in all $S_k$.
Any solution has poles in at least one $S_k$ \cite{KitaevKapaev}. For any
{\em two adjacent} sectors $S_k$ there is a one-parameter family of
solutions, called {\em{tronqu\'ee}} solutions, with the behavior
$y=\pm i\frac{\sqrt{z}}{\sqrt{6}}(1+o(1))$ as $z\to\infty$ in these
sectors, and moreover, they are asymptotic to a divergent power series
\begin{equation}\label{tro_ser}
  y(z)\sim \pm i\frac{\sqrt{z}}{\sqrt{6}}\left(1+\sum_{n=1}^\infty\frac{a_n}{z^{5n/2}}\right)\equiv\tilde{y}_0(x)
\end{equation}
whose coefficients $a_n$ are uniquely determined; the free parameter indexing the tronqu\'ee solutions is not visible in their asymptotic behavior \eqref{tro_ser}.

In any set of {\em four}  sectors $S_k$ there is
exactly one solution with behavior \eqref{tro_ser}, see \cite{Kapaev-2004}, \cite{Masoero};
these particular tronqu\'ee solutions, which are maximally regular
solutions (and have poles in only one sector), are called {\em tritronqu\'ees}. The terms "tronqu\'ee", "bitronqu\'ee", and "tritronqu\'ee" together with the corresponding solutions to P1 were first introduced in the pioneering work by Boutroux \cite{Boutroux}. For an overview of the asymptotic behavior of solutions of \P1 equation, and a wealth of references, see \cite{Clarkson}.

The tronqu\'ee solutions we study satisfy
\begin{equation}\label{newsectrinz}
y(z)=i{\frac{\sqrt{z}}{\sqrt{6}}}\,\left(1+o(1)\right)\ \text{as $|z|\to \infty$ with }\arg z\in
  \left[-\frac{3\pi}{5},\frac{\pi}{5}\right]
\end{equation}
and are analytic for large $z$ in this sector.
The other families of tronque\'es are obtained from it by the five-fold symmetry.

 \section{Main results}
\subsection{Overview} In Theorem\,\ref{asir2}  we establish a one-parameter convergent representation of tronque\'e solutions  valid in the pole free sector, using results in \cite{imrn} and \cite{duke}. The natural parameter is the constant multiplying the exponential ``beyond all orders'' of \eqref{tro_ser}. We show that if the constant is zero, then the solution is a tritronqu\'ee solution and describe the Stokes phenomenon for these solutions in Theorem\,\ref{TritCis0}. We find an asymptotic expansion uniformly valid throughout the sector in \eqref{newsectrinz} as well as in a region containing  the first array of poles, (Theorem \ref{FAPIn}) and determine the position of the poles in the first array to $O(x^{-4})$ in Proposition \,\ref{RegP}. These results are obtained using transseries representations of solutions; an overview of this topic is found in the Appendix, \S\ref{IntroTran}-\ref{historic}.

One of the most significant developments in a century of study of
Painlev\'e equations is the isomonodromic approach (and related ones), originating in \cite{Fuchs}, \cite{Garnier}, \cite{Jimbo-Miwa-Ueno}, \cite{Flaschka} and further developed
\cite{Clarkson}, \cite{Deift}, \cite{Deift1}, \cite{joshi},
\cite{Fokas}. A question is whether a complete asymptotic description
and explicit connection formulae are a direct consequence of the
Painlev\'e property, or more structure is needed, e.g., the existence
of an isomonodromic representation; this paper together with
\cite{inprep} are a step toward a positive answer to this question.
In \S\ref{find} we show that the transseries representation together
with a new method to describe solutions in singular regions
\cite{inprep} allows for a closed form calculation of the Stokes
multiplier by direct asymptotic methods using the meromorphicity of
solutions (not relying on isomonodromic-type methods).

\subsection{A convergent one-parameter representation of tronque\'e solutions}

We normalize \eqref{p1} as described in \cite{invent}: the following refinement of the Boutroux substitution
\begin{equation}
  \label{chvarx}
  z={24}^{-1}{30^{4/5}}x^{4/5}e^{-\pi i /5};\ y(z)=i\sqrt{z/6}(1-\tfrac{4}{25}x^{-2}+h(x))
\end{equation}
where the branch of the square root is the usual one, which is positive for
$z>0$, brings \eqref{p1} to the Boutroux-like form
\begin{equation}
  \label{eq:eqp}
h''+\frac{h'}{x}-h-\frac{h^2}{2}-\frac{392}{625}\,\frac{1}{ x^4}=0
\end{equation}
{
This normalization is associated with an interesting maximal regularity property, see Proposition \ref{RegP} below.}

A sector $S_k$  in $z$ corresponds, after the  normalization \eqref{chvarx}, to a quadrant
in $x$,  and the sector $-\pi<{\rm{arg}}\ z\leq \pi$ corresponds to the sector $-\pi<{\rm{arg}}\ x\leq 3\pi/2$. The fact that the solutions of \P1 are meromorphic implies that the solutions of \eqref{eq:eqp} (which have a branch point at $x=0$) return to the same values after analytic continuation around $0$ by an angle of $\tfrac{5\pi}{2}$.

Relatedly,
\eqref{eq:eqp} is invariant under the transformations $h(x)\mapsto h(xe^{\pm i\pi})$ and under the conjugation symmetry $h(x)\mapsto\overline{h(\overline{x})}$.

The tronque\'e solutions \eqref{newsectrinz} correspond to solutions $h$ satisfying
\begin{equation}\label{asyht}
h(x)=o(1)\ \text{as $x\to \infty$ with }\arg x\in \left[-\frac{\pi}{2},\frac{\pi}{2}\right]
\end{equation}

%\subsection{Asymptotic expansions and transseries of tronqu\'ee solutions of \P1}\label{nf}

Convergent expansions are obtained using general formal solutions described in Proposition\,\ref{Formh} (see \S\ref{IntroTran}-\ref{Sbeta} for a brief introduction to transseries solutions and their correspondence to actual solutions).

\begin{Proposition}\label{Formh}{\em Formal small solutions of the normalized \P1.}

(i) There is a unique power series solution of \eqref{eq:eqp} which is $o(1)$ as $x\to\infty$, and it has the form
\bel{defhh0}
\tilde{h}_0(x)= \sum_{k=4;\,k\,\text{even}}^\infty c_k x^{-k},\ \ \ \text{with }c_4={-}\frac{392}{625}
\ee

(ii) Transseries solutions of \eqref{eq:eqp} have the form
\bel{transh1}
\tilde{h}(x)=\tilde{h}_{0}(x)+\sum_{k\geq 1}C^k\,e^{-kx}\,\tilde{h}_{k}(x)\ \ \ \ \ \text{for }\arg x\in
(-\tfrac{\pi}{2},\tfrac{\pi}{2})
\ee
where $\tilde{h}_{k}(x)=x^{-k/2}\tilde{t}_{k}(x)$, with $\tilde{t}_{k}(x)$ a nonnegative integer power series in $x^{-1}$,
and
\bel{transh2}
\tilde{h}(x)=\tilde{h}_{0}(x)+\sum_{k\geq 1}C^k\,e^{kx}\,\tilde{\tilde{h}}_{k}(x) \ \ \ \ \ \text{for }\arg x\in
(-\tfrac{\pi}{2},\tfrac{\pi}{2})\pm i\pi
\ee
 where $\tilde{\tilde{h}}_{k}(x)=x^{-k/2}\,e^{\mp ik\pi/2}\,\tilde{t}_{k}(-x)$.

 \end{Proposition}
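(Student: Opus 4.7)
For part (i), I substitute the ansatz $\tilde h_0(x)=\sum_{k\ge 1}c_kx^{-k}$ into \eqref{eq:eqp}. Since $\tilde h_0''+\tilde h_0'/x=\sum_k k^2c_kx^{-k-2}$, matching the coefficient of $x^{-m}$ yields the recursion
\[(m-2)^2c_{m-2}-c_m-\tfrac12\!\!\!\sum_{\substack{m_1+m_2=m\\ m_1,m_2\ge 4}}\!\!\!c_{m_1}c_{m_2}=\tfrac{392}{625}\,\delta_{m,4}.\]
The cases $m=1,2,3$ force $c_1=c_2=c_3=0$, the case $m=4$ gives $c_4=-\tfrac{392}{625}$, and the recursion then uniquely determines every subsequent $c_m$. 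Since \eqref{eq:eqp} is invariant under $x\mapsto-x$ (the inhomogeneity and the operators $\partial^2$, $x^{-1}\partial$ are all even), an induction on $m$ using $c_1=c_3=0$ together with the parity-preserving convolution shows $c_{2\ell+1}=0$ for every $\ell\ge 0$, giving (i).

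For \eqref{transh1}, I write $\tilde h=\tilde h_0+\eta$ so that $\eta$ satisfies $\mathcal L\eta=\tfrac12\eta^2$ with $\mathcal L:=\partial^2+x^{-1}\partial-(1+\tilde h_0)$. The principal symbol $\partial^2-1$ of $\mathcal L$ has characteristic exponents $\pm 1$; in $\arg x\in(-\tfrac\pi 2,\tfrac\pi 2)$ only $e^{-x}$ is subdominant, so the general small transseries carries exponentials $e^{-kx}$ with $k\ge 1$. Substituting $\eta=\sum_{k\ge 1}C^ke^{-kx}H_k(x)$ and separating by powers of $C$ produces the hierarchy
\[H_k''+\bigl(-2k+\tfrac1x\bigr)H_k'+\bigl(k^2-1-\tfrac{k}{x}-\tilde h_0\bigr)H_k=\tfrac12\sum_{j=1}^{k-1}H_jH_{k-j}.\]
For $k=1$ the algebraic coefficient $k^2-1$ vanishes, and the dominant balance $-2H_1'-H_1/x\sim 0$ forces $H_1\sim c\,x^{-1/2}$; the substitution $H_1=x^{-1/2}\tilde t_1$ reduces to $\tilde t_1''-2\tilde t_1'+(\tfrac{1}{4x^2}-\tilde h_0)\tilde t_1=0$, whose formal-series solution is uniquely determined once the constant term $\tilde t_1(\infty)$ is normalized (absorbing it into $C$), and the recursion manifestly produces only nonnegative integer powers of $x^{-1}$. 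For $k\ge 2$ the algebraic term $(k^2-1)H_k$ dominates the LHS and $H_k$ is determined at each order by algebraic inversion from the lower-order data on the RHS. An induction on $k$ then gives $H_k=x^{-k/2}\tilde t_k$ with $\tilde t_k\in\CC[[x^{-1}]]$, the factor $x^{-k/2}$ being inherited from the convolutions $H_jH_{k-j}$.

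For \eqref{transh2}, I would invoke the invariance of \eqref{eq:eqp} under $x\mapsto xe^{\pm i\pi}$ noted earlier in the paper. Applying this rotation to \eqref{transh1} transforms each summand according to $e^{-kx}\mapsto e^{+kx}$, $x^{-k/2}\mapsto x^{-k/2}e^{\mp ik\pi/2}$, and $\tilde t_k(x)\mapsto\tilde t_k(-x)$, which yields precisely the form \eqref{transh2} with $\tilde{\tilde h}_k(x)=x^{-k/2}e^{\mp ik\pi/2}\tilde t_k(-x)$ on the rotated sector. That no other small transseries arises there follows again from $\lambda=\pm 1$ being the only characteristic exponents of $\mathcal L$. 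The main technical obstacle is the structural claim $\tilde t_k\in\CC[[x^{-1}]]$: for $k=1$ it requires checking that the first-order recursion induced by $-2\tilde t_1'$ has no resonance at positive integers (so that each coefficient $t_{1,m}$ is uniquely solvable in terms of $t_{1,m-1}$ and earlier), and for $k\ge 2$ it requires a careful induction ensuring the convolution $H_jH_{k-j}$ on the RHS remains within the integer-power class; both amount to elementary but somewhat lengthy bookkeeping.
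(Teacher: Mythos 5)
Your proposal is correct in substance, but it proves the proposition by a route different from the paper's. You work directly on the scalar equation \eqref{eq:eqp}: the coefficient recursion $(m-2)^2c_{m-2}-c_m-\tfrac12\sum c_{m_1}c_{m_2}=\tfrac{392}{625}\delta_{m,4}$ for part (i) is the "straightforward calculation" the paper alludes to, and your transseries hierarchy for $H_k$, the dominant balance giving $H_1\sim x^{-1/2}$, and the reduced equation $\tilde t_1''-2\tilde t_1'+(\tfrac1{4x^2}-\tilde h_0)\tilde t_1=0$ are all correct (the deferred bookkeeping indeed closes: the recursion coefficient $2m\neq0$ for $m\ge1$ at level $k=1$, and for $k\ge2$ the nonvanishing algebraic factor $k^2-1$ determines $\tilde t_k\in\CC[[x^{-1}]]$ uniquely with the $x^{-k/2}$ prefactor inherited from the convolution, exactly as you sketch; the $e^{\mp ik\pi/2}$ factors in \eqref{transh2} are a branch/normalization choice absorbed into $C$). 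The paper instead converts \eqref{eq:eqp} to the normal-form system \eqref{sysP1}--\eqref{noP1} with $\lambda_{1,2}=\pm1$, $\beta_{1,2}=\tfrac12$, invokes the general transseries classification \eqref{gtrans} of \cite{imrn} (with $d=2$, so $C_1$ or $C_2$ must vanish in each sector), and maps back to $h$ via \eqref{htoy}. What each buys: your scalar argument is self-contained and elementary, but the claim that \emph{all} small transseries solutions have exactly this form (only the exponentials $e^{-kx}$, no logarithms, powers precisely $x^{-k/2}$) rests on a characteristic-exponent heuristic, which is exactly the point the paper settles by citing the general theorem; the paper's route also sets up the system formulation ($\tilde{\bfy}_k$, their Borel transforms) that the rest of the paper reuses, whereas your approach would require redoing that translation later.
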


The proof of Proposition\,\ref{Formh} is found in \S\ref{PfFormh}.

We note that $\tilde{h}_n(x)$ are linked, via \eqref{chvarx}, to the $n$-instanton of \P1 \cite{Its_G_K}.

{\bf{Notations.}} In the following the
Laplace transform of  $Y(p)$ in the direction
$e^{i\phi}$ is defined as
\bel{DefLphi}
\mathcal{L}_\phi Y\,(x)=\int_{e^{i\phi}\RR_+} {\rm{e}}^{-px}\, Y(p)\, dp
\ee
where, by convention, $\phi=-\arg x$. The convolution is defined as $(f*g)(p)=\int_0^p f(s)g(p-s)ds$. The Borel transform of a series is, as usual,
\bel{DefB}
\mathcal{B}\left(x^{-\alpha}\sum_{n=0}^\infty c_nx^{-n}\right)=p^{\alpha}\sum_{n=0}^\infty \,c_{n}\,\frac{p^{n-1}}{\Gamma(n+\alpha)}\ \ \ \ \text{for }\alpha>0
\ee
(For an introduction to the Borel transform and Borel summation see \S\ref{LBsum}.)

 Next theorem describes the tronqu\'e solutions \eqref{newsectrinz} in coordinates \eqref{chvarx}.

\begin{Theorem}[Tronqu\'ees as Borel summed transseries]\label{asir2}\ \

I. Assume $h$ solves \eqref{eq:eqp} and satisfies
\begin{equation}\label{assph}
h(x)=o(1)\ \text{as }
  x\to\infty\text{ with } \arg(x)\in (-\tfrac{\pi}2,\tfrac{\pi}{2})
  \end{equation}
Then

 (i)  We have $h\sim \tilde{h}_0$
as $x\to +\infty$ and the asymptotic expansion is differentiable.

 Also, there exist constants $C_\pm$ so that
 \begin{equation}
 \begin{array}{l}
 h(x)\sim C_+x^{-1/2}e^{-x}\ \text{as}\ x\to +i\infty, \\ \\
h(x)\sim C_-x^{-1/2}e^{-x}\ \text{as}\ x\to -i\infty
\end{array}
\end{equation}

 (ii) Let
 \begin{equation}\label{defHk}
 H_k=\mathcal{B}\tilde{h}_k
 \end{equation}
 be the Borel transforms of the series in the transseries solution \eqref{transh1}.

Then $h(x)$ has the Borel summed transseries representations:
 \begin{equation}
    \label{eq:transh}
    h(x)=\left\{
    \begin{array}{l}
   \mathcal{L}_\phi H_0\,(x)+ \sum_{k=1}^{\infty} C_+^k e^{-kx}\,\mathcal{L}_\phi H_k\,(x)\ \ \ \text{for }
   - \phi= \arg x\in(0,\tfrac{\pi}{2}]\\ \\
    \mathcal{L}_\phi H_0\,(x)+\sum_{k=1}^{\infty} C_-^k e^{-kx}\,\mathcal{L}_\phi H_k\,(x)\ \ \ \text{for }
    -\phi= \arg\, x\in[-\tfrac{\pi}{2},0)
    \end{array}\right.
    \end{equation}
where $\mathcal{L}_\phi H_k$ are analytic for large $x$ with  $\arg x\in (-\pi/2, 3\pi/2)$ and the series converge for $|x|$ large enough with $0<|\arg x|\Le\tfrac{\pi}{2}$; $\mathcal{L}_\phi H_k=O(x^{-k/2})$ in these regions.

II. Similar Borel summed transseries representations hold for small solutions in the sectors $\arg(x)\in (-\tfrac{\pi}2,\tfrac{\pi}{2})\pm i\pi$, with $e^{-kx}$ replaced by $e^{kx}$, all $H_k(\cdot)$ with $k\Ge 1$ replaced by $-H_k(\cdot e^{\pm i\pi})$ and with the constants $C$ changing only at the Stokes lines $\arg x=\pm \pi$.

\end{Theorem}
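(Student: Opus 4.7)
The plan is to apply the general Borel summability and transseries machinery for nonlinear ODEs developed in \cite{imrn} and \cite{duke} to the normalized equation \eqref{eq:eqp}. The linearization of \eqref{eq:eqp} about $h=0$ has characteristic exponents $\pm 1$ at infinity with WKB prefactor $x^{-1/2}$, so the subdominant mode in $\arg x\in(-\tfrac{\pi}{2},\tfrac{\pi}{2})$ is $x^{-1/2}e^{-x}$; together with Proposition \ref{Formh} this places \eqref{eq:eqp} in the standard normal form of those references, and the relevant summability, uniqueness, and transseries theorems apply once their structural hypotheses are verified (which is routine since the nonlinearity is polynomial and the inhomogeneity is $O(x^{-4})$).

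\textbf{The distinguished solution and part (i).} Set $h_0:=\mathcal{L}_\phi H_0$. By \cite{imrn}, $H_0=\mathcal{B}\tilde h_0$ is analytic near $0$, extends along every ray $e^{i\phi}\RR_+$ with $|\phi|<\pi/2$ to an analytic function whose only singularities are at $p\in\{1,2,\ldots\}$, and grows at most subexponentially at infinity on such rays. Hence $h_0$ is analytic for large $|x|$ with $\arg x\in(-\tfrac{\pi}{2},\tfrac{\pi}{2})$, solves \eqref{eq:eqp}, and satisfies $h_0\sim \tilde h_0$ differentiably. Since $\tilde h_0$ is the unique formal small series solution (Proposition \ref{Formh}(i)), the same asymptotic on $\RR_+$ holds for every $h$ satisfying \eqref{assph}. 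For the tangential limits, write $h=h_0+\eta$: the leading WKB analysis of the linearization $\eta''+\eta'/x-(1+h_0)\eta=0$ produces a fundamental system $x^{-1/2}e^{\pm x}(1+o(1))$; the mode $x^{-1/2}e^{+x}$ is ruled out by the assumption $h=o(1)$ in the open sector, so variation of parameters forces $\eta\sim C_\pm x^{-1/2}e^{-x}$ as $x\to\pm i\infty$, with possibly different constants $C_\pm$ because $\arg x=0$ is a Stokes direction of $H_0$.

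\textbf{Transseries representation and part (ii).} Inserting the ansatz $h=h_0+\sum_{k\ge 1}C^k e^{-kx}h_k(x)$ into \eqref{eq:eqp} produces a triangular recursion for the $h_k$ whose formal solution is $\tilde h_k$ from Proposition \ref{Formh}(ii). Inductively, the Borel transforms $H_k=\mathcal{B}\tilde h_k$ satisfy convolution equations of the type treated in \cite{imrn} and inherit the resurgent analytic structure of $H_0$ (analyticity on $e^{i\phi}\RR_+$, $|\phi|<\pi/2$, off $\{1,2,\ldots\}$); this yields $\mathcal{L}_\phi H_k(x)=O(x^{-k/2})$ uniformly in the sector. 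The contraction mapping estimates of \cite{duke} then give a geometric bound on the $H_k$ in the appropriate weighted norm, so that $\sum_{k\ge 1}C_\pm^k e^{-kx}\mathcal{L}_\phi H_k(x)$ converges for $|x|$ large with $0<|\arg x|\Le\tfrac{\pi}{2}$ and sums to $h-h_0$ on the two half-sectors; matching with the tangential asymptotics from part (i) identifies the free parameter with $C_\pm$.

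\textbf{Part II and the Stokes phenomenon.} The rotated sectors $\arg x\in(-\tfrac{\pi}{2},\tfrac{\pi}{2})\pm i\pi$ are handled by the substitution $x'=xe^{\mp i\pi}$ together with the invariance $h(x)\mapsto h(xe^{\pm i\pi})$ of \eqref{eq:eqp}; under this rotation the subdominant mode becomes $e^{+kx}$ and the corresponding change of Laplace contour in the $p$-plane produces the replacement $H_k(\cdot)\mapsto -H_k(\cdot e^{\pm i\pi})$ for $k\ge 1$. The constant $C$ is locally constant on each open subsector and can jump only when $-\arg x$ crosses a singular direction of $H_k$; inside $\arg x\in[-\tfrac{3\pi}{2},\tfrac{3\pi}{2}]$ these occur precisely at the Stokes lines $\arg x=\pm\pi$. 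The main obstacle in the whole argument is the uniform control, over all $k$, of the Borel-plane convolution equations needed so that \eqref{eq:transh} converges rather than merely furnishing a formal asymptotic expansion; this is the technical core of \cite{imrn} and \cite{duke}, which I would invoke after checking that \eqref{eq:eqp} fits their structural framework.
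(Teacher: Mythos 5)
Your overall route — transform to the normal form and invoke the general Borel summation/transseries results of \cite{imrn}, \cite{duke} — is the same as the paper's. But there is a genuine gap at the very first step of part (i): you assert that ``since $\tilde h_0$ is the unique formal small series solution, the same asymptotic on $\RR_+$ holds for every $h$ satisfying \eqref{assph}.'' Uniqueness of the \emph{formal} solution does not imply that an \emph{actual} solution which is merely $o(1)$ possesses a power series asymptotics at all, let alone a differentiable one; and, more importantly, the general machinery you invoke does not apply under the hypothesis $h=o(1)$ alone. The paper's version of the summability input (Theorem \ref{Note43}, and the contraction argument of Lemma \ref{L42}) requires $\mathbf{y}=o(x^{-3})$ for the associated system, i.e.\ roughly $h=O(x^{-4})$ after the normalization \eqref{chvarx}. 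The paper bridges this explicitly: it cites the known asymptotic classification of \P1 solutions (\cite{Clarkson}, going back to Boutroux) to upgrade \eqref{assph} to $h(x)=O(x^{-4})$ in the sector, so that $\mathbf{y}$ given by \eqref{eq:trsf3} is $o(x^{-3})$, and then Lemma \ref{L42} (an integral-equation/contraction argument) yields $h\sim\tilde h_0$ with differentiable asymptotics. Without this upgrade your argument does not rule out small solutions lacking the algebraic decay needed for the Borel-summed representation, and this is precisely the nontrivial content hidden in your phrase ``routine since the nonlinearity is polynomial.''

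A secondary weakness: your derivation of the tangential behavior $h\sim C_\pm x^{-1/2}e^{-x}$ as $x\to\pm i\infty$ linearizes around $h_0$ and appeals to WKB plus variation of parameters, but you never control the nonlinear remainder, and the smallness of $\eta=h-h_0$ with exponential precision is exactly what is being proved. In the paper these limits, together with the identification of the free parameter with $C_\pm$ and the convergence of the series in \eqref{eq:transh}, all come packaged from the Borel-summed transseries representation of Theorem \ref{Note43} (via \eqref{like28}--\eqref{e42} and Lemma \ref{Lemma2}), rather than from a separate linearized analysis. Your treatment of Part II by the symmetry $h(x)\mapsto h(xe^{\pm i\pi})$ matches the paper, so once the decay gap above is filled (e.g.\ by the citation to the known \P1 asymptotics, or by an independent argument excluding slowly decaying oscillatory behavior in the sector), the rest of your outline is essentially the paper's proof.
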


The proof of Theorem\,\ref{asir2}  is found in \S\ref{PfTasir2}.

\subsection{Description of the Stokes phenomenon}\label{S2.2}

The proposition below links the Stokes constant $C_+-C_-$ in \eqref{eq:transh} to the leading behavior of $H_0=\mathcal{B}\tilde{h}_0$ at $p=1$.

\begin{Proposition}\label{H0_behave}

(i) Near $p=1$ $H_0(p)$ has the form ${H}_0(p)=S\,(1-p)^{-1/2}{A}(p)+{B}(p)$
with ${A}(p),\, {B}(p)$ analytic at $p=1$, ${A}(p)=1+O(p-1)$ and $S$ is a constant.

(ii) Denote
\begin{equation}\label{defhpm}
\displaystyle{\begin{array}{l} h^+(x)=\mathcal{L}_{\phi}H_0\,(x)\ \text{for }-\phi=\arg(x)\in(0,\tfrac{\pi}{2})\\ \\
                                                         h^-(x)=\mathcal{L}_{\phi}H_0\,(x)\ \text{for }-\phi=\arg(x)\in(-\tfrac{\pi}{2},0)
                                                         \end{array} }
                                                         \end{equation}

Then $h^+(x)$ can be analytically continued for large $x$ with $\arg x\in[-\tfrac{\pi}{2},\pi]$, $h^-(x)$ can be analytically continued for large $x$ with $\arg x\in[-\pi,\tfrac{\pi}{2}]$ and
\begin{equation}\label{jumph01}
h^+(x)-h^-(x)= -\mu e^{-x}x^{-1/2}(1+o(1))\ {\rm{as\ }}x\to+\infty\ \ \text{where }\ \ \mu=2iS\sqrt{\pi}
\end{equation}
with $S$ defined in (i).

(iii) The constants $C_\pm$ in \eqref{eq:transh} satisfy
$C_+-C_-=-\mu$
with $\mu$ as in \eqref{jumph01}.

  \end{Proposition}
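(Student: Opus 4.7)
The plan has three stages matching the three parts of the proposition. For (i), I would analyze the Borel transform $H_0=\mathcal{B}\tilde h_0$ via the nonlinear convolution equation it satisfies, obtained by Borel-transforming \eqref{eq:eqp}. The linearization of \eqref{eq:eqp} around the trivial solution reduces to $\eta''+\eta'/x-\eta=0$ at leading order, with characteristic exponents $\pm1$, so the first singularities of $H_0$ lie at $p=\pm1$. By general resurgence principles (cf.\ \cite{imrn}, \cite{duke}), the local form of $H_0$ at $p=1$ mirrors the Borel transform of the formal exponential correction $\tilde h_1(x)=x^{-1/2}\tilde t_1(x)$ appearing in Proposition~\ref{Formh}(ii); the half-integer power $x^{-1/2}$ translates into a square-root branch point in the Borel plane. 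Concretely, I would substitute the ansatz $H_0(p)=S(1-p)^{-1/2}A(p)+B(p)$ into the convolution equation near $p=1$ and, by separating the analytic from the branched parts, extract recursion relations that determine $A$ and $B$ as analytic functions near $p=1$ with $A(1)=1$, while the overall prefactor $S$ remains free — this is the Stokes constant.

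For (ii), both $h^+$ and $h^-$ arise as Laplace transforms of $H_0$ along rays immediately below and immediately above the positive $p$-axis. The analytic continuations in $x$ follow by rotating these contours, and the claimed ranges of $\arg x$ are obtained by checking that no singularities of $H_0$ other than $p=\pm 1$ are encountered during the rotation. Taking boundary values at real $x=r>0$, the difference $h^+(r)-h^-(r)$ is captured by a small clockwise loop integral around $p=1$:
\begin{equation*}
h^+(r)-h^-(r)=\oint_{\gamma_1^{\mathrm{CW}}} e^{-pr}\,H_0(p)\,dp.
\end{equation*}
The analytic piece $B(p)$ contributes nothing by Cauchy's theorem; the branched piece, after collapsing onto the cut $[1,\infty)$ using the jump values $\pm i(p-1)^{-1/2}$ of $(1-p)^{-1/2}$ above and below the cut and an application of Watson's lemma, yields $-2iS\sqrt{\pi}\,e^{-r}r^{-1/2}(1+o(1))=-\mu\,e^{-r}r^{-1/2}(1+o(1))$, with $A(1)=1$ picking off the leading coefficient.

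For (iii), the two sectorial representations of $h$ in Theorem~\ref{asir2}(I.ii) agree at $x=r>0$ by analyticity of $h$. Subtracting,
\begin{equation*}
h^+(r)-h^-(r)=\sum_{k\geq 1}e^{-kr}\bigl(C_-^k\,\mathcal{L}_{0^+}H_k(r)-C_+^k\,\mathcal{L}_{0^-}H_k(r)\bigr).
\end{equation*}
The Stokes jumps of $\mathcal{L}H_k$ for $k\geq 1$ are themselves of size $O(e^{-r})$ relative to $\mathcal{L}H_k$, so contribute only at $O(e^{-(k+1)r})$; the only surviving $O(e^{-r})$ term on the right is $(C_--C_+)\mathcal{L}H_1(r)$. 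Combining with (ii) and the leading behavior of $\mathcal{L}H_1$ obtained by Watson's lemma applied to $\tilde h_1=x^{-1/2}\tilde t_1$, with the sign fixed by the normalization convention of Proposition~\ref{Formh}, gives $C_+-C_-=-\mu$.

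The main obstacle is (i): establishing the precise local structure $H_0(p)=S(1-p)^{-1/2}A(p)+B(p)$ with $A(1)=1$ rigorously requires a careful treatment of the nonlinear convolution equation near the singularity (or a direct appeal to \cite{imrn}, \cite{duke}). Once this is in hand, (ii) reduces to a Hankel-contour evaluation and (iii) to transseries matching, both routine in principle.
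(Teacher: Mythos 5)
Your strategy matches the paper's in all three stages: (i) is obtained from the general Borel-plane results of \cite{imrn}, \cite{duke} (the paper routes this through the normalized system, Theorem~\ref{Note43}(iv)(a), and the linear relation \eqref{e42}, rather than an ansatz substituted into the convolution equation, but the substance is the same and your fallback of citing those works is exactly what the paper does); (ii) is the same Hankel-contour collapse onto the cut $[1,\infty)$ plus Watson's lemma; (iii) is the same matching of the two Borel-summed transseries across the Stokes line $\arg x=0$, with the jumps of $\mathcal{L}_\phi H_k$ for $k\geq 1$ relegated to order $e^{-2x}$.

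The genuine gap is in your justification of the analytic continuation in (ii). You claim the stated ranges of $\arg x$ follow by rotating the Laplace ray and ``checking that no singularities of $H_0$ other than $p=\pm 1$ are encountered.'' First, $H_0$ is singular at every nonzero integer (Lemma~\ref{Lemma2}(c): $A_0$ is analytic only on the universal covering of $\CC\setminus\{\pm1,\pm2,\ldots\}$). Second, and decisively, continuing $h^+$ to $\arg x\in[-\tfrac{\pi}{2},0)$ (equivalently $\phi\in(0,\tfrac{\pi}{2}]$) forces the ray to sweep across $\RR_+$, where the entire array $p=1,2,3,\ldots$ lies; mere rotation cannot cross it. The paper's proof replaces the rotated ray by a ray with $\arg p>0$ together with an infinite sum of loops, each encircling one point $k\in\ZZ_+$, and uses the bounds of Lemma~\ref{Lemma2}(d),(f) (of the type $\|H_k\|_b<\delta^k$) to prove convergence of the resulting series, which at the same time exhibits the continuation as a Borel-summed transseries; some version of this argument is indispensable for the claimed ranges, and the same remark applies to the continuation of $h^-$ past $\arg x=0$. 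A secondary weakness: in (iii) you dispose of the sign by appeal to ``the normalization convention of Proposition~\ref{Formh},'' but the constant in $C_+-C_-=-\mu$ is exactly where the bookkeeping matters — the leading behavior of $\mathcal{L}_\phi H_1$ brings in the normalization $A_1(0)=1$ versus the factor $\Gamma(\tfrac12)=\sqrt{\pi}$ from $\mathcal{B}(x^{-1/2})$, as well as the relation \eqref{like28} to the system variables — so as written your matching identity fixes $C_--C_+$ only up to an undetermined positive multiple and sign; this step should be carried out explicitly (the paper's own proof of (iii) is terse here, delegating the computation to \S\ref{Sbeta} and \eqref{eq:sjump}).
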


 The proof of {Proposition}\,\ref{H0_behave} is found in \S\ref{PfP3}.
 \begin{Note}{\rm The {\em Stokes constant} $\mu$ was calculated in closed form, $  \mu=i\sqrt{\frac{6}{5 \pi }}$
    first by Kapaev using the method of isomonodromic deformations  \cite{Kapaev} (some corrections were made in \cite{KitaevKapaev}).
   The {\em existence} a constant $\mu$  (independent of $C_\pm$) such that $C_--C_+=\mu$ is known for  a wide class of
   differential equations, see  formula following (1.15) in \cite{imrn},
   and also  \cite{duke}, \eqref{eq:sjump}.  An {\em{explicit expression}} for $\mu$ is expected only in special cases such as
integrable equations. The explicit value also follows, without isomonodromic-type methods from the asymptotic analysis in this paper together with \cite{inprep}.}

 \end{Note}

\subsection{Arrays of poles near regular sectors of tronqu\'ee solutions}\label{firstpoles}
Theorem\,\ref{FAPIn} describes  the asymptotic behavior of tronqu\'ee solutions for large $x$ in the pole free sectors together with the region of the first array of poles: \eqref{eq:eq51} and \eqref{eq:devh1} give a uniform expansion of the tronqu\'ees in the sector of analyticity up to and including the first array of poles near $i\RR^+$. Based on this, Proposition\,\ref{RegP} gives the position of these poles, and the way it depends on the value of the constant beyond all orders $C_+$.

\begin{Theorem}\label{FAPIn}\ { (i) \cite{invent}
Let $h(x)$ be a solution as in Theorem\,\ref{asir2}\,I with {$C_+\ne 0$.} Denote
\begin{equation}
  \label{eq:eqdefxi}
  \xi=C_+x^{-1/2}e^{-x}
\end{equation}
Then the leading behavior $h(x)$ for large $|x|$ with ${\rm{arg}}\, x$ close to $\tfrac{\pi}{2}$  is
 \begin{equation}
   \label{eq:eq51}
   h\sim F_0(\xi)+\frac{F_{1}(\xi)}{x}+\frac{F_{2}(\xi)}{x^2}+\cdots\ \ (|x|\to \infty,\ x\in\mathcal{D}_{x})
    \end{equation}
where
\begin{equation}\label{regZx}
{ \mathcal{D}_{x}=\left\{x\in\CC\,|\, |x|>R,\,\arg x\in (-\tfrac{\pi}{2}+\delta,\tfrac{\pi}{2}+\delta),\,|\xi(x)-12|>\epsilon,\, |\xi(x)|<\epsilon^{-1}\right\} }
 \end{equation}
for any $\delta,\epsilon>0$ small enough and $R=R(\epsilon,\delta)$ large enough, and
where
\begin{equation}\label{eq:f11}
 F_0(\xi)= \tfrac{144\, \xi}{(\xi-12)^2},\, F_{1}(\xi)=\tfrac{
\frac{1}{60}\xi^4-3\xi^3-210\xi^2
-216\xi}{(\xi-12)^3},\ldots,\, F_{n}(\xi)=\tfrac{P_n(\xi)}{(\xi-12)^{n+2}}
\end{equation}
with $P_n$ polynomials of degree $2n+2$ for $n\Ge 1$.

(ii)   An  expansion similar to \eqref{eq:eq51} holds for $g=h(1+\frac13h)^{-1}$ in the region $\mathcal{D}'_x$ given by \eqref{regZx} with $12$ replaced by $-12$:
   \begin{equation}
     \label{eq:devh1}
     g\sim G_0(\xi)+\frac{1}{x}G_1(\xi)+\frac{1}{x^2}G_2(\xi)+\cdots
   \end{equation}
   with
   $$G_0(\xi)=\frac{144\xi}{(\xi+12)^2},\ \ G_1(\xi)=-\frac{1}{60}\frac{\xi (\xi-12)(\xi^3-180\xi^2-12600\xi-12960)}{(\xi+12)^4}$$
 At any point in the region $|\xi(x)|<\epsilon^{-1}$ one, or both
of the expansions \eqref{eq:eq51}, \eqref{eq:devh1} holds.}
(iii) The symmetry $h\to\overline{h({\overline{x}}})$ implies that a similar representation exists in the fourth quadrant with $C_-$ instead of $C_+$.
\end{Theorem}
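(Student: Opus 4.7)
The plan is to derive (ii) from a substitution that replaces the $h$-equation with a related ODE whose leading-order two-scale reduction has $144\xi/(\xi+12)^2$ as its analogue of $F_0$, and to derive (iii) from the conjugation symmetry of \eqref{eq:eqp}.

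For (ii), I would first verify that substituting $h = 3g/(3-g)$ into \eqref{eq:eqp} yields a polynomial second-order ODE for $g$,
\begin{equation*}
9(3-g)g'' + 18(g')^2 + \tfrac{9(3-g)g'}{x} - 3g(3-g)^2 - \tfrac{9}{2}g^2(3-g) - \tfrac{392(3-g)^3}{625\,x^4} = 0,
\end{equation*}
and then apply the two-scale analysis of \cite{invent} to it: insert the ansatz $g = G_0(\xi) + G_1(\xi)/x + G_2(\xi)/x^2 + \cdots$ with fast variable $\xi = C_+ x^{-1/2} e^{-x}$ satisfying $\xi'(x) = -\xi(1 + 1/(2x))$, and collect powers of $1/x$. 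Because $F \mapsto F/(1+F/3)$ is a Mobius map in the dependent variable that is solution-preserving between the leading-order equations of (i) and (ii), the leading-order solution follows directly from part (i): using $1 + F_0/3 = (\xi+12)^2/(\xi-12)^2$ one gets $G_0 = F_0/(1+F_0/3) = 144\xi/(\xi+12)^2$, and the matching condition $G_0(\xi) \to 0$ as $\xi \to 0$ singles out this branch.

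The higher-order correctors $G_n(\xi)$ are then obtained by solving the sequence of linear inhomogeneous ODEs that arise at successive orders in $1/x$, with integration constants fixed by matching, as $\xi \to 0$, to the Borel-summed transseries of Theorem \ref{asir2} applied to $g$; a direct computation confirms the stated formula for $G_1$, and induction gives the rational form $G_n = Q_n(\xi)/(\xi+12)^{n+2}$. Uniform asymptotic validity in $\mathcal{D}'_x$ follows by the same contraction/Gronwall scheme as in \cite{invent}: one writes the $N$-th remainder as the solution of an ODE with small source and inverts the linearization of the leading-order equation in a weighted sup-norm on a compact set of $\xi$-values avoiding $|\xi + 12| \leq \epsilon$. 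The hard part is precisely this remainder estimate at the boundary $|\xi + 12| = \epsilon$, where denominators $(\xi+12)^{n+2}$ grow and inversion of the linearized two-scale operator becomes delicate; but since $g$ and its leading behavior $G_0$ stay bounded throughout $\mathcal{D}'_x$ (unlike $h$ near the first-array poles), the fixed-point scheme of \cite{invent} should close with Gevrey-type bounds.

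For (iii), the map $h(x) \mapsto \overline{h(\bar x)}$ is an involutive symmetry of \eqref{eq:eqp} exchanging upper and lower half-planes, and Theorem \ref{asir2} shows that the exponentially small correction uses $C_+$ above and $C_-$ below. Running the two-scale argument of (i)--(ii) verbatim in the fourth quadrant with fast variable $\xi = C_- x^{-1/2} e^{-x}$ gives the same leading orders $F_0, G_0$ and the same recursion for the correctors, so the representation is identical in form with only $C_+$ replaced by $C_-$.
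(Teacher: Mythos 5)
Your proposal follows essentially the same route as the paper: part (i) is taken from \cite{invent} (as the paper does), part (ii) is obtained by noting that the transformed equation for $g=h(1+\tfrac13 h)^{-1}$ satisfies the same hypotheses so the two-scale machinery of \cite{invent} applies, with the expansion coinciding with the M\"obius transform of the expansion in (i) (hence $G_0=F_0/(1+F_0/3)=144\xi/(\xi+12)^2$), and part (iii) follows from the conjugation symmetry. The only cosmetic difference is that you work with the scalar second-order ODE for $g$ while the paper passes to a first-order system in the normal form \eqref{sysP1}; the substance, including deferring the rigorous remainder estimates to \cite{invent}, is the same.
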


The proof is found in \S\ref{PfRem8}.
The counterpart of expansion \eqref{eq:eq51} in the original variables of P$_I$ is given in \cite{invent}.

\begin{Proposition}[Position of poles]\label{RegP}\ \newline
(i) Any solution as in Theorem\,\ref{asir2} with $C_+\ne 0$ in \eqref{eq:transh} is analytic for large $x$ in the region $|\xi(x)|<12$ and has a first array of poles near the curve  $\xi(x)=12$, located at
\begin{equation}
  \label{eq:pospoles}
 x_n=2n\pi i+L-\frac{\frac{109}{120}+\frac12 L}{2 n\pi i}+\frac{\frac{4699}{2400}+\frac{139}{120}L+\frac14L^4}{(2 n\pi i)^2}\\ - \frac{\frac{41402111}{6480000}-\frac{899}{200}L-\frac{77}{60}L^2-\frac16 L^3}{(2 n\pi i)^3}+O(n^{-4})
\end{equation}
for $n\to\infty$, where $  L=\displaystyle \ln\(\frac{C^+}{12(2 n\pi i)^{1/2}}\)$.

(ii) The following maximal regularity property holds. If $\mathcal{A}$ is an analytic function tangent to the identity, $\mathcal{A}(0)=0,\mathcal{A}'(0)=1$, then $\mathcal{A}(h(x))$ is singular for some large $x$ in the region $|\xi(x)|<R$ if $R>12$.

\end{Proposition}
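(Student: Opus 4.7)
For part (i), my approach is to work through the $g$-representation of Theorem \ref{FAPIn}(ii), since $g=h/(1+h/3)$ is regular in a neighborhood of the first array of poles of $h$. The identity $g-3=-9/(3+h)$ shows the poles of $h$ are exactly the solutions of $g(x)=3$, i.e.\ of
\[
G_0(\xi)-3+\frac{G_1(\xi)}{x}+\frac{G_2(\xi)}{x^2}+\cdots=0.
\]
A direct computation gives $G_0(\xi)-3=-3(\xi-12)^2/(\xi+12)^2$, a double zero at $\xi=12$ matching the double pole of $F_0$ there, and the explicit formula for $G_1$ contains a manifest $(\xi-12)$ factor, so $G_1(12)=0$. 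Substituting $\xi=12+\delta$ and Taylor-expanding each $G_k$ at $\xi=12$ reduces the pole equation to a balance of the form
\[
-\frac{\delta^2}{192}\bigl(1+O(\delta)\bigr)+\alpha\,\frac{\delta}{x}+\frac{G_2(12)}{x^2}+\cdots=0,\qquad \alpha:=G_1'(12),
\]
from which, selecting the branch of the leading quadratic matching the array, $\delta_n=c_1/x_n+c_2/x_n^2+c_3/x_n^3+O(x_n^{-4})$ is determined iteratively.

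To convert this into an asymptotic in $n$, I would invert $C_+x_n^{-1/2}e^{-x_n}=12+\delta_n$. Taking logarithms and choosing the branch of the logarithm placing $x_n$ near $i\RR^+$ gives
\[
x_n=2\pi i n+\ln(C_+/12)-\tfrac{1}{2}\ln x_n-\tfrac{\delta_n}{12}+O(\delta_n^2).
\]
With $L:=\ln\bigl(C_+/(12(2\pi i n)^{1/2})\bigr)$ as the natural bootstrap variable, the standard iteration (expand $\ln x_n=\ln(2\pi i n)+\ln(1+(x_n-2\pi i n)/(2\pi i n))$ and $\delta_n$ as series in $1/(2\pi i n)$, substitute into the displayed equation, and compare orders) produces each coefficient of \eqref{eq:pospoles} as a polynomial in $L$.

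For part (ii), I would argue by contradiction: assume $\mathcal{A}(h)$ is analytic throughout $\{|\xi(x)|<R\}$ for some $R>12$ and $|x|$ large. Infinitely many pole locations $x_n$ of $h$ from (i) lie in this region, and at each of them $h\to\infty$; analyticity of $\mathcal{A}(h)$ at $x_n$ forces $\mathcal{A}$ to extend analytically at $z=\infty$ with a finite limit there. Combined with analyticity at $z=0$ ($\mathcal{A}(0)=0$, $\mathcal{A}'(0)=1$), if $\mathcal{A}$ were entire it would be bounded and hence constant by Liouville, contradicting $\mathcal{A}'(0)=1$; so $\mathcal{A}$ must have at least one finite singularity $z_0$. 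It then suffices to find $x^*$ with $|\xi(x^*)|<R$ and $h(x^*)=z_0$, since $\mathcal{A}(h)$ is singular there. Using $h(x)=F_0(\xi(x))+O(1/x)$ with $F_0(\xi)=144\xi/(\xi-12)^2$, a direct computation (parametrize $\xi=12\,e^{i\theta}$) gives $F_0|_{|\xi|=12}=-3/\sin^2(\theta/2)$, so $F_0$ maps $\{|\xi|<12\}$ onto $\CC\setminus(-\infty,-3]$ and the circle onto $(-\infty,-3]$. Hence $z_0$ has a pre-image under $F_0$ with $|\xi|\le 12$, and a Rouch\'e/implicit-function argument then promotes it to an exact $h(x^*)=z_0$ with $|\xi(x^*)|<R$ (the strict inequality $R>12$ absorbs boundary cases).

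The main obstacle for part (i) is the bookkeeping required to extract the coefficient of $(2\pi i n)^{-3}$ from the iteration: one needs the Taylor expansions of $G_0,\ldots,G_3$ at $\xi=12$ to several orders, together with the correct cross-combination of the expansions of $\delta_n$ and $\ln x_n$, and the polynomial-in-$L$ structure of each coefficient must be assembled consistently. For part (ii), the subtler step is converting the surjectivity of $F_0$ onto $\CC\setminus(-\infty,-3]$ into the exact equation $h(x^*)=z_0$ inside $|\xi|<R$: for exceptional $z_0\in(-\infty,-3]$ the $F_0$-pre-image lies on $|\xi|=12$ itself, and one must exploit the uniform convergence of the expansion in Theorem \ref{FAPIn} together with the strict inequality $R>12$ to displace the true pre-image slightly off the circle.
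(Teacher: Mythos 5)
Your proposal follows the paper's own route for both parts: poles of $h$ are exactly the solutions of $g(x)=3$ with $g=h(1+\tfrac13 h)^{-1}$, obtained from the expansion \eqref{eq:devh1} near $\xi=12$ and converted to \eqref{eq:pospoles} by inverting $\xi(x_n)=12+O(1/x_n)$ with $\ln\xi\sim\ln 12+2n\pi i$ and bootstrapping in $L$; and (ii) rests on the leading profile mapping the closed disk $|\xi|\le 12$ onto the whole Riemann sphere (the paper argues this via the product of the two preimages of each value, you via the explicit boundary image $(-\infty,-3]$), so that any nonconstant analytic $\mathcal{A}$ tangent to the identity, having a singularity at some finite $z_0$ or at $\infty$, makes $\mathcal{A}(h)$ singular somewhere in $|\xi(x)|<R$. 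The only piece of the statement you do not address---analyticity of $h$ for large $x$ with $|\xi(x)|<12$---is immediate from \eqref{eq:eq51}, which bounds $h$ for $|\xi|<a<12$, exactly as the paper notes.
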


The proof of Proposition\,\ref{RegP} is found in \S\ref{PfPr10}.

{\em Note:} rotating $x$ further into the
second quadrant, $h$ develops
successive arrays of poles   separated by distances
$O(\ln x)$ of each other as long as $\arg(x)=\tfrac{\pi}{2}+o(1)$ \cite{invent}.

\subsection{Tritronqu\'e solutions}

The following theorem characterizes tritronqu\'e solutions as the tronqu\'ees with a zero constant beyond all orders and establishes a representation as convergent series in all four pole free sectors.
\begin{Theorem}\label{TritCis0}
(i) The tritronque\'e  solution $y_t$ of \eqref{p1} with
\begin{equation}\label{sectorinz}
y_t(z)=i\sqrt{\frac{z}{6}}\,\left(1+o(1)\right)\ \text{as $|z|\to \infty$ with }\arg z\in
  \left(-\frac{3\pi}{5},\pi\right)
\end{equation}
is the unique solution analytic for large $x$ in some sector in the first quadrant, having $C_+=0$ in its representation \eqref{eq:transh}.

(ii) Define $\hsig=\mathcal{L}_\phi H_0$ for $-\phi=\arg x\in(\pi,\tfrac{3\pi}{2})$ and let $h^+(x), h^-(x)$ be as in \eqref{defhpm}.

Then  $h^+$ and $\hsig$ can be analytically continued to $\arg x=\pi$ and
\begin{equation}\label{jumphpi}
h^+(x)-\hsig(x)= \mu e^{-|x|}|x|^{-1/2}(1+o(1))\ {\rm{for\ }}x\to-\infty
\end{equation}
with $\mu$ as in \eqref{jumph01}.

(iii) We have
\begin{equation}\label{ht_trans}
    h_t(x)=\left\{
    \begin{array}{l}h^+(x)
   \ \ \ \text{for }
    \arg x=- \phi\in(0,\pi)\\ \\
    h^-(x)+\sum_{k=1}^{\infty} (-\mu)^k e^{-kx}\,\mathcal{L}_\phi H_k\,(x)\ \ \ \text{for }
     \arg x=- \phi\in[-\tfrac{\pi}{2},0)\\ \\
    \hsig(x)+\sum_{k=1}^{\infty} (i\mu)^k e^{kx}\,\mathcal{L}_\phi H_k\,(x)\ \ \ \text{for }
     \arg x=- \phi\in(\pi,\tfrac{3\pi}{2})
    \end{array}\right.
    \end{equation}

\end{Theorem}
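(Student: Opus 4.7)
The plan is to establish (i), (ii), (iii) in order, leveraging Theorem \ref{asir2}, Proposition \ref{RegP}, Proposition \ref{H0_behave}, and the symmetry $h(x)\mapsto h(xe^{\pm i\pi})$ of \eqref{eq:eqp}.

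For part (i), the tritronqu\'ee condition requires $y_t$ analytic for large $|z|$ with $\arg z\in(-3\pi/5,\pi)$, which under \eqref{chvarx} corresponds to analyticity of $h_t$ for large $x$ with $\arg x\in(-\pi/2,3\pi/2)$; in particular $h_t$ is analytic across the positive imaginary axis. In a first-quadrant sector, Theorem \ref{asir2}(I) represents $h_t$ in the form \eqref{eq:transh} with some constant $C_+$. A nonzero $C_+$ would produce, by Proposition \ref{RegP}(i), an array of poles $x_n\sim 2n\pi i$ accumulating along $\arg x=\pi/2$, contradicting analyticity; hence $C_+=0$. Since $C_+$ parameterizes the tronqu\'ee family in that sector, this also delivers uniqueness; conversely, $C_+=0$ makes $h_t=\mathcal{L}_\phi H_0=h^+$ analytic in $\arg x\in[-\pi/2,\pi]$ by Proposition \ref{H0_behave}(ii), and beyond by part (ii).

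For part (ii), $h^+$ is already continued to $\arg x=\pi$. I would extend $h^\sigma$ down to $\arg x=\pi^+$ by rotating the Laplace contour through the upper half $p$-plane. The contours used for $h^+(x)$ at $\arg x=\pi^-$ and for $h^\sigma(x)$ at $\arg x=\pi^+$ approach the negative real $p$-axis from opposite sides, so $h^+(x)-h^\sigma(x)$ equals the integral of $e^{-px}H_0(p)$ around the nearest singularity of $H_0$ on that axis. The $\pi$-rotation symmetry of \eqref{eq:eqp}, which fixes $\tilde h_0$ since its expansion \eqref{defhh0} contains only even powers of $x^{-1}$, forces $H_0$ to have a square-root singularity at $p=-1$ of the same local form as at $p=1$ in Proposition \ref{H0_behave}(i), with a constant $\tilde S$ linked to $S$ by the symmetry. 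Asymptotic evaluation of the loop integral at $x\to-\infty$ then yields \eqref{jumphpi} with the same $\mu=2iS\sqrt\pi$.

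For part (iii), each line of \eqref{ht_trans} is the appropriate version of Theorem \ref{asir2} with the transseries constant now pinned down: in $\arg x\in(0,\pi)$, $C_+=0$ collapses \eqref{eq:transh} to $h^+$; in $\arg x\in[-\pi/2,0)$, Theorem \ref{asir2}(I)(ii) gives $h^-+\sum C_-^k e^{-kx}\mathcal{L}_\phi H_k$ with $C_-$ determined by $C_+-C_-=-\mu$ from Proposition \ref{H0_behave}(iii); in $\arg x\in(\pi,3\pi/2)$, Theorem \ref{asir2}(II) supplies the representation with $e^{kx}$ together with the rotation factor $e^{-ik\pi/2}=(-i)^k$ of \eqref{transh2}, and the overall multiplier is fixed by the Stokes jump of part (ii) across $\arg x=\pi$. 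The displayed coefficients $(-\mu)^k$ and $(i\mu)^k$ then emerge once these signs, together with the conventions on $H_k$ and $\mathcal{L}_\phi H_k$ in Theorem \ref{asir2}(II), are tracked carefully. The main obstacle is the quantitative step in (ii): proving the $p=-1$ mirror singularity from the $\pi$-symmetry (rather than from ad hoc analysis), orienting the loop correctly, and tracking the branches of $x^{-1/2}$ on the sheet $\arg x=\pi$ and of $(1+p)^{-1/2}$ precisely enough to recover the Stokes constant $\mu$ exactly---not merely up to a $\pm 1$ or $\pm i$ factor. Once (ii) is secured, part (iii) is essentially bookkeeping of constants already provided by the earlier results.
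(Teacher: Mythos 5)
Your route is essentially the paper's: (i) via Proposition \ref{RegP} (poles near $\xi(x)=12$ would contradict analyticity past $\arg x=\pi/2$, so $C_+=0$, uniqueness coming from the one-to-one correspondence in Theorem \ref{asir2}); (ii) via deformation of the two Laplace contours flanking the negative real $p$-axis into a loop around $p=-1$ and Watson-type evaluation; (iii) via Theorem \ref{asir2} plus Proposition \ref{H0_behave}(iii) and the $\pi$-rotation/branch bookkeeping.

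The one step you leave open is, however, the step that carries the content of (ii): you assert a mirror square-root singularity at $p=-1$ ``with a constant $\tilde S$ linked to $S$ by the symmetry,'' and you yourself flag that you only control the jump up to a factor $\pm1$ or $\pm i$. That is not enough, since \eqref{jumphpi} is precisely the statement that the constant is the \emph{same} $\mu=2iS\sqrt{\pi}$ as in \eqref{jumph01}. The paper closes this by the oddness of $H_0$ (Lemma \ref{Lemma2}(b),(d)): the convolution equation \eqref{eq:Bplane} has a unique solution analytic at $p=0$, and if $H(p)$ solves it so does $-H(-p)$; hence $H_0(p)=-H_0(-p)$ as an identity of analytic continuations, which combined with Proposition \ref{H0_behave}(i) gives exactly $H_0(p)=-S\,(1+p)^{-1/2}A(-p)-B(-p)$ near $p=-1$, i.e.\ the coefficient is $-S$ on the nose (your heuristic ``$\tilde h_0$ has only even powers'' is the germ of this argument, since then $\mathcal{B}\tilde h_0$ has only odd powers of $p$, but it must be promoted to the identity $H_0(p)=-H_0(-p)$ valid along the continuation to $p$ near $-1$). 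With that in hand, the loop integral with the branch convention $(1+p)^{1/2}=i|1+p|^{1/2}$ for $p<-1$ after clockwise rotation yields $h^+-\hsig\sim 2iS\sqrt{\pi}\,e^{x}|x|^{-1/2}$, which is \eqref{jumphpi}. Your part (iii) is then, as you say, bookkeeping; the paper does exactly what you propose (with the branch conventions of \S\ref{brachoise}, and the last line alternatively obtained from the symmetry $h(x)\mapsto\overline{h(-\overline{x})}$), so apart from supplying the oddness argument your proposal matches the paper's proof.
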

The proof of Theorem\,\ref{TritCis0}  and the choice of branches are given  in \S\ref{PfTritCis0}.
\begin{Note} (i) The uniqueness of $h_t$ and the symmetry of the equation imply $h_t(x)=\overline{h_t(-\overline{x})}$, and the last expansion in \eqref{ht_trans} follows from the middle one, symmetry and the choice of branches.

(ii) Also by symmetry, we see that $C_-=0$ corresponds to tritronqu\'ee solutions which are pole free in
the sector $\arg z\in
  (-\frac{7\pi}{5},\frac{\pi}{5})$.
\end{Note}

\begin{Remark}
  {\rm  $y_t$ is now known to be pole free in the whole sector, up to the origin ({\em{Dubrovin's conjecture}} holds); more precisely, $y_t$ is analytic in a ball near the origin and in the closed sector $\{z:||z|>0, \arg z \in [-3\pi/5,\pi]\}$, \cite{DubrovinC}}.
\end{Remark}

\subsection{Calculating the constant beyond all orders from the values of the tronqu\'ee}
The tronqu\'ee solutions are distinguished by the constant $C_\pm$ beyond all orders. We recall the following result which allows for the calculation of $C_{\pm}$ using the actual solution.

 \begin{Theorem} \cite{OC-MDK} The constant $C_\pm$ in \eqref{eq:transh}  satisfies
   \begin{equation}
     \label{eq:eqCpm}
     C_\pm=\lim_{\begin{subarray}{c}x\rightarrow\infty\\\arg(x)=-\phi\end{subarray}}\, e^x\, x^{1/2}\left(h(x)-\sum_{{k\le |x|}}\frac{c_{k}}{x^k}\right)
   \end{equation}
(see \eqref{defhh0}) for $x$ in the corresponding quadrant.

In the direction $\arg x=0$ the limit  \eqref{eq:eqCpm} is $\tfrac{1}{2}C_++\tfrac{1}{2}C_-$.
\end{Theorem}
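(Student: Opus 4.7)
The plan is to extract \eqref{eq:eqCpm} directly from the Borel summed transseries representation \eqref{eq:transh}, combined with a sharp least-term truncation estimate for the formal power series $\tilde{h}_0$. Throughout I assume $\arg x=-\phi\in(0,\pi/2]$; the case $\arg x\in[-\pi/2,0)$ is symmetric.

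From \eqref{eq:transh},
\begin{equation*}
h(x)-\mathcal{L}_\phi H_0(x)=\sum_{k\ge 1}C_+^{\,k}\,e^{-kx}\,\mathcal{L}_\phi H_k(x).
\end{equation*}
By Theorem~\ref{asir2}(ii), $\mathcal{L}_\phi H_k(x)=O(x^{-k/2})$, so the $k=1$ contribution is $C_+\,x^{-1/2}e^{-x}(1+o(1))$ while the tail is $O(e^{-2\Re x}x^{-1})=o(e^{-\Re x}x^{-1/2})$. Hence the theorem reduces to the least-term truncation identity
\begin{equation*}
\mathcal{L}_\phi H_0(x)-\sum_{k\le |x|}\frac{c_k}{x^k}=o\!\left(e^{-x}x^{-1/2}\right)\quad\text{as } x\to\infty.
\end{equation*}

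To establish this identity I would exploit the singularity structure of $H_0=\mathcal{B}\tilde h_0$ supplied by Proposition~\ref{H0_behave}(i): $H_0$ is analytic on $e^{i\phi}\RR_+$ for $\phi\ne 0$, with nearest singularity at $p=1$ of the form $S(1-p)^{-1/2}A(p)+B(p)$. Writing the coefficients $c_k$ via the Borel transform one has
\begin{equation*}
\mathcal{L}_\phi H_0(x)-\sum_{k=0}^{N-1}\frac{c_k}{x^k}=\int_{e^{i\phi}\RR_+}e^{-px}\,R_N(p)\,dp,
\end{equation*}
where $R_N(p)$ is the Taylor remainder of $H_0$ (modulo an $O(1/x^N)$ endpoint contribution). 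Standard Borel-Laplace/Watson-type estimates, using the radius of convergence $1$ of the Taylor expansion of $H_0$ at the origin dictated by the singularity at $p=1$, give $|R_N(p)|\le C (N!)\,\rho^{-N}$ for any $\rho<1$ on the truncated segment, together with an endpoint saddle contribution of size $e^{-|x|}|x|^{-1/2}$ times an arbitrarily small factor when $N=\lfloor|x|\rfloor$. Combining these bounds gives the $o(e^{-x}x^{-1/2})$ estimate above.

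The main obstacle is the careful \emph{optimal-truncation} accounting at $N\sim|x|$: one must verify that the factorial growth of $R_N$ is balanced against $x^{-N}$ so that the remainder is strictly smaller than $e^{-x}x^{-1/2}$, not merely comparable. This is precisely the content of the theorem of \cite{OC-MDK}, whose hypotheses are satisfied here by Proposition~\ref{H0_behave}(i) and the analyticity of $H_0$ on the chosen Laplace ray established in Theorem~\ref{asir2}(ii). Finally, on the Stokes direction $\arg x=0$ the Laplace contour pinches the singularity at $p=1$; the truncated sum equals the principal value of the Laplace integral, which is the average of the two lateral Borel sums $\mathcal{L}_{0^\pm}H_0$. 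Taking this average in the two one-sided limits produces exactly $\tfrac12(C_++C_-)$, completing the statement.
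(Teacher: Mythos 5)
First, a point of comparison: the paper does not prove this statement at all --- it is recalled from \cite{OC-MDK}, so the only ``paper proof'' is that citation, and your sketch is in the same spirit as that reference (optimal truncation of the Borel--summed representation). Your reduction via \eqref{eq:transh} is correct: for a fixed direction $\arg x=\theta\in(0,\tfrac{\pi}{2}]$ the claim does reduce to estimating $\mathcal{L}_\phi H_0(x)-\sum_{k\le|x|}c_k x^{-k}$, and the tail $k\ge2$ is harmless. But note that off the Stokes line the delicate least-term balancing you single out as ``the main obstacle'' is not actually where the difficulty lies: since $\phi=-\arg x$, the Laplace integrand satisfies $\Re(px)=|p|\,|x|$, and a crude split of the contour (Cauchy bound for the Taylor remainder on $|p|\le\rho<1$, the exponential bound of Lemma~\ref{Lemma2}(f) beyond, and the elementary incomplete-Gamma bound for the polynomial part) already gives an error $O(e^{-(1-\epsilon)|x|})$ for every $\epsilon>0$, which is $o(|e^{-x}x^{-1/2}|)=o(e^{-|x|\cos\theta}|x|^{-1/2})$ simply because $\cos\theta<1$. (Accordingly, your displayed claim ``$=o(e^{-x}x^{-1/2})$'' is a fixed-direction statement; it cannot hold uniformly as $\theta\downarrow0$, which is consistent with the theorem as stated.)

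The genuine soft spot is the Stokes direction $\arg x=0$, which is precisely where the content of \cite{OC-MDK} lies, and your treatment of it contains a claim that is not correct as stated: the optimally truncated sum does not ``equal the principal value of the Laplace integral''. What is true --- and what requires the sharp least-term analysis (the $\sim\sqrt{|x|}$ comparable terms near the least term, Berry-type smoothing) --- is that the truncated sum agrees with the median summation $\tfrac12\bigl(\mathcal{L}_{0^+}+\mathcal{L}_{0^-}\bigr)H_0$ up to $o(e^{-x}x^{-1/2})$; asserting the identity begs exactly this question. In addition, to get the value $\tfrac12(C_++C_-)$ you must represent $h$ on the Stokes line by continuing \emph{both} lateral representations \eqref{eq:transh} to $\arg x=0$ and averaging them, using Proposition~\ref{H0_behave}(ii)--(iii) (equivalently \eqref{jumph01} and $C_+-C_-=-\mu$), rather than a contour-pinching heuristic. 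Since you, like the paper, ultimately defer the optimal-truncation estimate to \cite{OC-MDK}, the overall plan is legitimate, but as written the $\arg x=0$ half of the statement is asserted rather than proved.
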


\section{Application: finding the Stokes multiplier}\label{find}

Let $y_t$ be the tritronqu\'ee of Theorem\,\ref{TritCis0}. The continuation of $y_t(z)$ for $\arg z$ from $-\tfrac{3\pi}{5}$ up to $-\pi$, through the pole sector, is given with all technical details in \cite{inprep}. We describe below the method used, the intuition and formal calculations behind  \cite{inprep}.

After normalization, $y_t(z)$ corresponds to $h_t(x)$ analytic for
large $x$ in the sector $\arg x\in(-\tfrac{\pi}{2},\tfrac{3\pi}{2})$ and having poles in the sector
 \begin{equation}\label{def-Sigma}
  \Sigma=\{x\, |\, -\pi<{\rm{arg}}\, x<-\pi/2\}
  \end{equation}
The first array of poles near the edges of this sector is described in  \S\ref{firstpoles}.

 We show that in the pole region \eqref{def-Sigma} $h_t$ can be described by constants of motion which are
valid starting with $\arg x$ close to $-\tfrac{\pi}{2}$ (the first array of poles) up to $\arg x$ close to $-\pi$, close to another "first" array of poles, where $h_t$ can be again {\em matched to a transseries}; the asymptotic expansions of the
constants of motion that we obtain depend explicitly on the constant beyond all orders $\mu$. The transseries
representation of $h_t$ also depends on  $\mu$ in a way visible in the leading order asymptotics when  $\arg x=-\pi$ or $3\pi/2$, cf. Theorem \ref{asir2} (we note that both arguments of $x$ correspond to $z\in\RR^-$).

\subsection{The connection problem} The solution $y_t$ is meromorphic; this
was known since Painlev\'e, and proving meromorphicity does not require a
Riemann-Hilbert reformulation, see e.g. \cite{OCRDCP1,Hinkkanen,Gromak} for direct
proofs and references.
  Starting with  a large $z\in\RR^+$ we analytically continue $y_t$
(i) anticlockwise on an arccircle until $\arg z=\pi$   and (ii) clockwise on
an arccircle until $\arg z=-\pi$. The continuation (ii) traverses the pole
sector, $\arg z\in (-\pi,-3\pi/5)$.
Because of the above-mentioned
meromorphicity, we must have
\begin{equation}
  \label{eq:mer1}
  y_t(|z|e^{i\pi})=y_t(|z|e^{-i\pi})
\end{equation}
In variables \eqref{chvarx} this corresponds to the following. We start with large
$x$ with  $\arg x=\tfrac{\pi}{4}$ and (i') analytically continue $h_t(x)$ anticlockwise, until $\arg
x=\tfrac{3\pi}{2}$, and also (ii')  clockwise, until $\arg x=-\pi$. The
single-valuedness equation \eqref{eq:mer1} and eq. \eqref{chvarx} imply
 \begin{equation}\label{hpm}
h_t(|x|e^{3\pi i/2}) = -h_t(|x|e^{-\pi i})-2+\frac{8}{25|x|^2}
\end{equation}
Eq. \eqref{hpm} implies a nontrivial
equation for $\mu$, \eqref{stok2} below, explained in \S\ref{polreg},  and which determines $\mu$ uniquely. The fact that $\mu$ is uniquely
determined relates to the fact that there is only one solution, the
tritronqu\'ee, with
algebraic behavior in the region \eqref{sectorinz}, cf. \cite{invent}, Proposition
15. Relatedly, the last expansion in \eqref{ht_trans} is not used in the calculation.

\subsection{Asymptotics of the tronque\'es in the pole region $\arg x\in (-\pi,-\pi/2)$: heuristics}\label{polreg}
Our calculations rely on the construction of two functionally independent adiabatic invariants. In a nutshell, an adiabatic invariant is a conserved quantity given as an asymptotic expansion with controlled errors (the expansion here is in powers of $1/x$.) Adiabatic invariants are useful when dealing with small perturbations of integrable systems (usually in the stronger sense of explicit integrability). In our problem, for large $x$, \P1 is treated as a small perturbation of the elliptic equation $f''-f-f^2/2=0$. Relying on adiabatic invariants is reminiscent of KAM techniques: indeed, conserved quantities are the complex-analogs of  action-angle variables.  To our knowledge the adiabatic invariants (48) and (50), refined in \cite{inprep} are new. Their construction is facilitated by the Boutroux ``cycle'' technique \cite{Boutroux}.

 Note that for
large $x$ \eqref{eq:eqp} is close to the autonomous Hamiltonian system
\begin{equation}
  \label{eq:eqell}
  h''-h-h^2/2=0\ \ \text{with Hamiltonian $s/2$}
\end{equation}
where
\begin{equation}
  \label{def_s}
  s={h'}^2-h^2-h^3/3
\end{equation}
The solutions of \eqref{eq:eqell} are elliptic functions, doubly
periodic in $\CC$. For \eqref{eq:eqp} we expect solutions to be
asymptotically periodic, and $s$ to be a slow varying quantity; this
is certainly the case in the region where \eqref{eq:eq51} holds. It is
then natural to take $h=:u$ as an independent {\em angle-like}
variable and treat $s$ and $x$ as dependent variables.  With $w=u'$ we
first rewrite equation (\ref{eq:eqp}) as a system
\begin{align}
\label{eqqq1}u' &=w\\
\label{eqqq2}w' &=u+\frac{u^2}{2}-\frac{w}{x}+\frac{392}{625}\,\frac{1}{ x^4}
\end{align}
and then, with
\begin{equation}\label{def_R}
R(u,s)=\sqrt{u^3/3+u^2+s}
\end{equation}
we transform \eqref{eqqq1}, \eqref{eqqq2}
into a system for $s(u)$ and $x(u)$:
\begin{align}\label{syst2}
&\frac{ds}{du}=-\frac{2w}{x}+\frac{784}{625}\,\frac{1}{x^4}=-\frac{2R(u,s)}{x}+\frac{784}{625}\,\frac{1}{x^4}\\
\label{syst3}
&\frac{dx}{du}=\frac{1}{w}=\frac{1}{R(u,s)}
  \end{align}

 Let $h$ be a tronqu\'ee solution of \eqref{eq:eqp} having poles for large $x$ in the sector $\Sigma$ in \eqref{def-Sigma}.
 It turns out that there are closed curves $\mathcal{C}$, similar to the classical
  {\em cycles} \cite{Kitaev}, such that $R(u,s(u))$
  does not vanish on $\mathcal{C}$ and $x(u)$
  traverses $\Sigma$ from edge to edge as $u$ travels along  $\mathcal{C}$ a number $N$
  times.

   Written in integral form, \eqref{syst2} and \eqref{syst3} become
\begin{align}
  \label{eq:eqds2n}
 & s(u)=s_n-2\int_{u_n}^u \(\frac{R(v,s(v))}{x(v)}-\frac{392}{625}\frac{1}{x(v)^4}\)dv \\
 & x(u)=x_n+\int_{u_n}^u \frac{1}{R(v,s(v))}dv \label{eq:eqdx2n}
\end{align}
where the integrals are along the path $\mathcal{C}$ and we write $u_n$ to denote that $u$ has traveled $n$ times along $\mathcal{C}$, and $s_n=s(u_n)$ and
  $x_n=x(u_n)$.

\subsubsection{Initial conditions and iteration}\label{IC1} We take  initial values  $x_0,\, s_0$ so that $|x_0|$ is large and  close  to the
  first array of poles (see \S\ref{firstpoles}) i.e. $\arg(x_0)=-\pi/2(1+o(1))$ for large $|x_0|$, and $s_0$ sufficiently small; $x_0,s_0$ are arbitrary otherwise.

  \begin{Note}\label{IC2}
    The parameters $x_0, s_0$ are free constants in an open set in $\CC^2.$ By Theorem \ref{FAPIn} (iii),  \eqref{eq:eq51} and \eqref{eq:eqdefxi} we see that for the tronqu\'ees $x_0, s_0$ depend on the constant beyond all orders $C_-$.
  \end{Note}

   Starting with  $u_0\in\mathcal{C}$  the  following hold\footnote{Later on we will choose $u_0=-4\in\mathcal{C}$.}. For some $N=N_{m}(x_0)$ of the order $|x_0|$, $x_N$ is close to
  the last array of poles (i.e. $\arg(x_N)=-\pi(1+o(1))$), and
  $|x_n|$  is of the order $|x_0|$ for all $n\leq N$.  Also, two roots
  of $R(u_n,s_n),n=0,1..,N$ are in the interior of $\mathcal{C}$ and a third one is in
  its exterior.

To establish these facts, an important ingredient in \cite{inprep}  is the study of the Poincar\'e
map for \eqref{eq:eqds2n}, \eqref{eq:eqdx2n}, namely the study of
$(s_{n+1},x_{n+1})$ as a function of $(x_n,s_n)$. The Poincar\'e map is used to
eliminate the fast evolution. The asymptotic expansions of $s(u)$ and $x(u)$
when $u$ is between $u_n$ and $u_{n+1}$ are straightforward local expansions
of \eqref{eq:eqds2n} and \eqref{eq:eqdx2n}.  We denote
\begin{equation}
  \label{eq:eqJL}
    \JN (s)=\oint _\mathcal{C} R(v,s)\,dv;\ \ \LN(s)=\oint_\mathcal{C} \,\frac{dv}{R(v,s)}
\end{equation}
It is easily checked that
\begin{equation}
  \label{eq:difeq00}
 \JN\,''+\frac{1}{4}\rho(s)\JN=0;\ \ \text{where}\ \  \rho(s)=\frac {5}{3s \left( 3\,s+4 \right) }
\end{equation}
and, since $\JN\,'=\LN/2$ we get
\begin{equation}
  \label{eq:difeq01}
  \LN\, ''-\frac{\rho'(s)}{\rho(s)}\LN\, '+\frac{1}{4}\rho(s)\LN=0
\end{equation}

The points $s=0$ and $s=-4/3$ are regular singular points of
\eqref{eq:difeq00} (and of \eqref{eq:difeq01}) and correspond to the
values of $s$ for which the polynomial $u^3/3+u^2+s$ has repeated
roots.
 Simple asymptotic analysis
of \eqref{eq:eqds2n} and \eqref{eq:eqdx2n} shows that the Poincar\'e map satisfies
 \begin{equation}\label{sna0}
s_{n+1}=s_n-\frac{2J_n}{x_n}\left(1+{o}(1)\right)\ \ \ \ \ \ \ {\rm{with\ }}J_n=\JN(s_n)
\end{equation}
\begin{equation}\label{xna0}
x_{n+1}=x_n+L_n\left(1+{o}(1)\right)\ \ \ \ \ \ \ {\rm{with\ }}L_n=\LN(s_n)
\end{equation}
Here, and in the following {\em heuristic} outline, $o(1)$ stands for terms
which are small for large $x_n$ and large $n$. The rigorous justification of
these estimates is the subject of \cite{inprep}.
\subsubsection{Solving \eqref{sna0} and \eqref{xna0}; asymptotically conserved quantities}
We see from \eqref{sna0} that $s_{n+1}-s_n\ll s_n$ and $x_{n+1}-x_n\ll x_n$.
It is natural to take a ``continuum limit'' and
approximate $s_{n+1}-s_n\text{ by } ds/dn$ and $x_{n+1}-x_n\text { by }dx/dn$.
We get
\begin{equation}\label{eqapprx1}
\frac{ds}{dx}=\frac{ds/dn}{dx/dn}=\frac{-2\JN(s)}{x\LN(s)}\left(1+o(1)\right)=-\frac{\JN(s)}{x\JN\, '(s)}\left(1+o(1)\right)
\end{equation}
which implies, by separation of variables and integration,
\begin{equation}
  \label{eq:eqapprx}
  \mathcal{Q}(x,s):= x\JN(s)= x_0\JN(s_0)\, \left(1+o(1)\right)=\mathcal{Q}(x_0,s_0)\, \left(1+o(1)\right)
\end{equation}
That is, $\mathcal{Q}$ is asymptotically constant. A second
(nonautonomous) one is obtained using \eqref{sna0} and \eqref{eq:eqapprx} as
follows. We write
\begin{equation}\label{eq:2ndc1}
  \frac{1}{\JN\, ^2(s)}\,  \frac{ds}{dn}=-\frac{2}{x_0\JN(s_0)}\left(1+o(1)\right)
\end{equation}
Let  $\hat{J}$ be an independent solution of \eqref{eq:difeq00}  with $\hat{J}(0)=0$ and denote
\begin{equation}\label{defK}
\mathcal{K}(s):=\kappa_0\int_{0}^s\frac{ds}{\JN(s)^2}
=\frac{\hat{J}(s)}{\JN(s)}
\end{equation}
where  $\kappa_0$ is
the Wronskian of $\JN$ and $\hat{J}$. Integrating both sides of \eqref{eq:2ndc1}
from $0$ to $n$ we get
\begin{equation}\label{eq:2ndc}
\mathcal{K}(s)-\mathcal{K}(s_0) =-\frac{2n}{\kappa_0\, x_0\JN(s_0)}\,\left(1+o(1)\right)\Rightarrow  \mathcal{K}(s)+\frac{2n}{\kappa_0\, x_0\JN(s_0)}=\mathcal{K}(s_0)+o(1)
\end{equation}
(for $n=O(x_0)$).
 $\mathcal{K}$ is  a Schwarzian triangle function.
We thus obtain two functionally independent  asymptotically conserved quantities \eqref{eq:eqapprx},\,\eqref{eq:2ndc}
from which we can retrieve the asymptotics of the tronqu\'ee solutions in the pole sector.
 The rigorous proof is the subject of \cite{inprep}, where three asymptotic orders of the expansion are obtained. Near the antistokes lines the expansion takes a slightly different form.

 \begin{Note}{\rm
     From Note \ref{IC2} it is seen that in the pole region, the classical asymptotic expansion contains two free constants (which for the tronqu\'ees depend on $C_-$).  The fact that the constants are classically visible makes possible to calculate their {\em change} as the pole region is traversed, and thus to {\em calculate explicitly $\mu$} from the requirement that $h_t$ has a transseries representation at both edges of one pole sector. This condition leads to the equation (\cite{inprep}):
} \end{Note}
\begin{multline}\label{stok2}
-\frac{4 \sqrt{3}-24 i}{5 \pi}=\frac{24 i N}{5 \pi}+\frac{1}{5 \pi ^2}\bigg[12 \ln \left(\frac{(6+6 i) \left(\sqrt{3}+i\right)}{\mu}\right)+i \pi -4 \sqrt{3} \pi
-6 \ln (240 \pi )\bigg]
\end{multline}
A straightforward  calculation shows that \eqref{stok2} implies
\begin{equation}
  \label{eq:mu-value}
  \mu=\sqrt{\frac{6}{5 \pi }}i
\end{equation}

\section{Proofs}

\subsection{Normal form for \eqref{eq:eqp}}

We first transform \eqref{eq:eqp} to a normal form, to which we apply the general results of \cite{imrn}, \cite{duke}, then use this information to obtain results about the tronqu\'ee solutions of  \eqref{eq:eqp}.

Simple algebra brings \eqref{eq:eqp} to the normal form

\begin{equation}\label{sysP1}
\mathbf{y}'+\left({\Lambda}+\frac{1}{x}{B}\right)\mathbf{y}=\mathbf{g}(x^{-1},\mathbf{y})
\end{equation}
with ${\Lambda}=\text{diag}(\lambda_1,\lambda_2),{B}=\text{diag}(\beta_1,\beta_2)$ where
\begin{equation}\label{noP1}
  \lambda_{1}=1,\ \lambda_{2}=-1,\ \beta_{1}=\beta_{2}=\tfrac12
  \end{equation}
and $\mathbf{g}=(g_1,g_2)^T$ is analytic in $(x^{-1},\bfy)$ in a neighborhood of zero and
$\mathbf{g}=O(x^{-4})+O(\bfy x^{-2})+O(|\mathbf{y}|^2)$ as $x^{-1},\bfy\to 0$; see \S\ref{rewrP1} for details.

\subsection{Transseries solutions}\label{PfFormh}

For an introduction to transseries see \S\ref{IntroTran}.

\begin{Proposition}{\em General formal solution of the system}

The transseries solutions of the system \eqref{sysP1}-\eqref{noP1} are the following.

I. For $\arg x\in\left(-\frac{\pi}{2},\frac{\pi}{2}\right)$ the transseries solutions are
\bel{tranyt}
\tilde{\bfy}(x)=\tilde{\bfy}_0(x)+\sum_{k=1}^\infty C^k\,e^{-kx}\,\tilde{\bfy}_k(x)\ \ \ \text{with }\tilde{\bfy}_k(x)=x^{-k/2}\, \tilde{\bfs}_k(x)
\ee
where $C$ is an arbitrary constant, $\tilde{\bfs}_k(x)$ is an entire power series in $x^{-1}$ and
$$\tilde{\bfy}_0= \sum_{k\Ge 4}\left(\begin{array}{c}1\\(-1)^k\end{array}\right)c_k x^{-k},\ \ \ \ \tilde{\bfs}_{1}=\left(1+\frac{1}{8x}\right)\left(\begin{array}{c}1\\0\end{array}\right)+O(x^{-2})$$

II. For $\arg x\in\left(\frac{\pi}{2},\frac{3\pi}{2}\right)$  the transseries solutions are
\bel{tranytt}
\tilde{\bfy}(x)=\tilde{\tilde{\bfy}}_0(x)+\sum_{k=1}^\infty C^n\,e^{kx}\,\tilde{\tilde{\bfy}}_k(x)\ \ \ \text{with }\tilde{\tilde{\bfy}}_k(x)= x^{-k/2}\,\tilde{\tilde{\bfs}}_k(x)
 \ee
where $C$ is an arbitrary constant, and $\tilde{\tilde{\bfs}}_k(x)={\tilde{\bfs}}_k(-x)$, $\tilde{\tilde{\bfy}}_0(x)={\tilde{\bfy}}_0(-x)$.
\end{Proposition}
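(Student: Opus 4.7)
The plan is to substitute the transseries ansatz directly into \eqref{sysP1} and solve the resulting infinite cascade of linear ODEs for $\tilde{\mathbf{y}}_k$ by equating coefficients of each distinct $C^k e^{-kx}$, then infer Part II from the reflection symmetry $x\mapsto -x$ of the underlying equation \eqref{eq:eqp}.

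\textbf{The zeroth order.} Setting $C=0$ in the ansatz reduces the problem to finding a formal power series solution of $\tilde{\mathbf{y}}_0'+(\Lambda+B/x)\tilde{\mathbf{y}}_0=\mathbf{g}(x^{-1},\tilde{\mathbf{y}}_0)$. Because $\Lambda=\mathrm{diag}(1,-1)$ is invertible and $\mathbf{g}=O(x^{-4})+O(\mathbf{y}x^{-2})+O(|\mathbf{y}|^2)$ with analytic dependence, the coefficients $a_k$ in $\tilde{\mathbf{y}}_0=\sum a_k x^{-k}$ are uniquely and recursively determined by $\Lambda a_k=(\text{known terms from }a_0,\ldots,a_{k-1})$, starting at $k=4$ because of the $O(x^{-4})$ inhomogeneity. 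The specific form $a_k=c_k(1,(-1)^k)^T$ is read off from the explicit expression of $\mathbf{g}$ produced by the reduction \S\ref{rewrP1} of \eqref{eq:eqp} to \eqref{sysP1}, which places $h$ essentially on the ``symmetric'' diagonal direction $(1,1)^T$ and $h'$ on the ``antisymmetric'' one $(1,-1)^T$, so that the parity of the power of $x^{-1}$ selects which direction is non-zero.

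\textbf{The first order.} Equating the coefficient of $Ce^{-x}$ after linearizing $\mathbf{g}$ around $\tilde{\mathbf{y}}_0$ gives
\begin{equation}\label{pfposy1}
\tilde{\mathbf{y}}_1'+(\Lambda-I+B/x)\tilde{\mathbf{y}}_1 = M(x)\tilde{\mathbf{y}}_1,\qquad M(x):=\partial_{\mathbf{y}}\mathbf{g}(x^{-1},\tilde{\mathbf{y}}_0)=O(x^{-2}).
\end{equation}
Here $\Lambda-I=\mathrm{diag}(0,-2)$, so the second component $\tilde{y}_{1,2}$ is algebraically determined from the first, while the first satisfies a scalar equation whose leading part is $\tilde{y}_{1,1}'+(1/(2x))\,\tilde{y}_{1,1}=O(x^{-2})\tilde{y}_{1,1}$. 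The indicial equation at $x=\infty$ has exponent $1/2$, yielding the scaling $\tilde{y}_{1,1}\sim c\,x^{-1/2}$ with $c$ a free constant (absorbed into $C$), and the formal solution proceeds in integer powers of $x^{-1}$; a direct two-term computation with the explicit $B$ and the next coefficient of $M$ produces the stated $1+\tfrac{1}{8x}+O(x^{-2})$.

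\textbf{Higher orders and the main difficulty.} For $k\ge 2$ the coefficient of $C^ke^{-kx}$ satisfies
\begin{equation}\label{pfposyk}
\tilde{\mathbf{y}}_k'+(\Lambda-kI+B/x)\tilde{\mathbf{y}}_k = M(x)\tilde{\mathbf{y}}_k + Q_k(\tilde{\mathbf{y}}_1,\ldots,\tilde{\mathbf{y}}_{k-1};x),
\end{equation}
where $Q_k$ is polynomial of degree $\ge 2$ in lower-order $\tilde{\mathbf{y}}_j$ with coefficients in formal power series in $x^{-1}$. Now $\Lambda-kI=\mathrm{diag}(1-k,-1-k)$ is invertible, so both components are algebraically determined at leading order and the ODE reduces to a genuinely regular-singular recursion. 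By induction, each product $\tilde{\mathbf{y}}_{j}\tilde{\mathbf{y}}_{k-j}$ scales as $x^{-k/2}$, so the inhomogeneity $Q_k$ sits in $x^{-k/2}\CC[[x^{-1}]]$; the algebraic inversion of $\Lambda-kI+B/x$ and the derivative do not change this structure, so $\tilde{\mathbf{y}}_k=x^{-k/2}\tilde{\mathbf{s}}_k(x)$ with $\tilde{\mathbf{s}}_k\in\CC[[x^{-1}]]$. The main bookkeeping obstacle is precisely to verify that no half-integer powers beyond the overall prefactor $x^{-k/2}$ are created by the recursion; this rests on the fact that all bilinear couplings mix $\tilde{\mathbf{y}}_j$ with $\tilde{\mathbf{y}}_{k-j}$ so their half-integer contributions always sum to an integer.

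\textbf{Part II by symmetry.} Equation \eqref{eq:eqp} is invariant under $x\mapsto -x$, and the reduction of \S\ref{rewrP1} is compatible with this symmetry up to the sign of $h'$, which exchanges the two components of $\mathbf{y}$; at the level of \eqref{sysP1} this amounts to $\Lambda\mapsto -\Lambda$, $B\mapsto B$, $\mathbf{g}(x^{-1},\mathbf{y})\mapsto \mathbf{g}(-x^{-1},\mathbf{y})$ after the component swap. Consequently if $\tilde{\mathbf{y}}(x)=\tilde{\mathbf{y}}_0(x)+\sum_k C^ke^{-kx}\tilde{\mathbf{y}}_k(x)$ is a formal solution on $(-\tfrac{\pi}{2},\tfrac{\pi}{2})$, then $\tilde{\mathbf{y}}(-x)$ is a formal solution on the rotated sector $(\tfrac{\pi}{2},\tfrac{3\pi}{2})$, with the small exponential becoming $e^{+kx}$ and the pre-factor series $\tilde{\mathbf{s}}_k$ replaced by $\tilde{\mathbf{s}}_k(-x)$. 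This yields \eqref{tranytt}, completing Part II. The uniqueness of transseries solutions of the given exponential content is a standard application of the general transseries framework of \S\ref{IntroTran} to the normal-form system.
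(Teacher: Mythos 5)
Your argument is essentially correct, but it takes a genuinely different route from the paper: you substitute the ansatz into \eqref{sysP1}--\eqref{noP1} and solve the resulting hierarchy order by order in $C^k e^{-kx}$, whereas the paper disposes of the proposition in one line by invoking the general transseries theory for rank-one normal-form systems (Theorem 2 of \cite{imrn}, quoted as \eqref{gtrans} in the Appendix). Your direct computation buys explicitness: it shows where the prefactor $x^{-k/2}$ comes from (the indicial exponent $\beta_1=\tfrac12$ at $k=1$, plus the fact that the couplings $\tilde{\bfy}_j\tilde{\bfy}_{k-j}$ with $j+(k-j)=k$ reproduce $x^{-k/2}$ times integer series), and it produces the coefficient $1+\tfrac1{8x}$ by hand. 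The citation buys exactly the points you delegate to ``the standard framework'': that a transseries in $\arg x\in(-\tfrac\pi2,\tfrac\pi2)$ can carry only the decaying exponential, so the constant attached to $e^{+x}$ must vanish (this is how the paper phrases it in proving Proposition \ref{Formh}), that no logarithms or new non-integer exponents appear at any order (at $k=1$ this is the nonresonance $-n\,t_n=(\text{known})$, $n\ge1$, in your recursion, which you should state), and uniqueness of the family. One bookkeeping caution for Part II: the symmetry of \eqref{sysP1} induced by $x\mapsto -x$ is $\bfy(x)\mapsto(y_2(-x),y_1(-x))$, i.e.\ reflection \emph{composed with the component swap} (as your own derivation indicates); it is this swapped reflection, with the constants $(-1)^{-k/2}$ absorbed into $C$, that maps \eqref{tranyt} to \eqref{tranytt} and keeps the zeroth-order term equal to the unique small power series solution, so you should not drop the swap when writing the conclusion.
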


{\em Proof.} This is an application of Theorem 2 in \cite{imrn}.

\subsubsection{Proof of Proposition\,\ref{Formh}}
(i) is obtained by a straightforward calculation. Furthermore, this follows from the fact that \eqref{eq:eqp} is a re-writing of \P1, whose asymptotic expansions are known \cite{Clarkson}.

The general formal solution of systems with a rank one irregular singularity has the form \eqref{gtrans} below; the system \eqref{sysP1}-\eqref{noP1} has $d=2$ and the transseries must have $C_1$ or $C_2$ equal to zero. Since by \eqref{eq:trsf3} below we have
\bel{htoy}
h=\frac{1}{2}\left(1-\frac{1}{4x}\right)y_1+\frac{1}{2}\left(1+\frac{1}{4x}\right)y_2
\ee
part (ii) follows. $\Box$

\subsection{Proof of Theorem\,\ref{asir2}}\label{PfTasir2}

After establishing Lemma\,\ref{L42}, we apply \cite{imrn} to the system \eqref{sysP1}-\eqref{noP1}; we then establish properties of the Borel transform of the series $\tilde{h}_k$ in Lemma\,\ref{Lemma2}, then complete the proof of Theorem\,\ref{asir2}.

{
  \begin{Lemma}\label{L42}
{(i)}    Let $\bfy(x)$ be a solution of \eqref{sysP1}-\eqref{noP1} so that $\bfy(x)=o(x^{-3})$ as $|x|\to\infty$ with $\arg(x)=a$ (for some $a$).

    Then $\bfy(x)$ has a formal asymptotic power series in powers of $x^{-1}$ and the asymptotics is differentiable.

(ii) For any $k\in\ZZ$ there is a unique solution of  \eqref{sysP1}-\eqref{noP1}

 such that
 \begin{equation}
   \label{eq:eqasyh}
   \bfy=o(x^{-2}\ln(x)^2)
 \end{equation}
as $|x|\to\infty$ with $\arg(x)=(2k+1)\pi i/2$, $k\in \ZZ$; these are tritronque\'ees.
  \end{Lemma}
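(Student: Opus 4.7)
\emph{Part (i): bootstrap from the hypothesis $\mathbf{y}=o(x^{-3})$.} The estimate on $\mathbf{g}$ gives $\mathbf{g}(x^{-1},\mathbf{y})=O(x^{-4})+O(\mathbf{y}/x^2)+O(|\mathbf{y}|^2)=O(x^{-4})$ along the ray $\arg x=a$. I would read the equation as $\mathbf{y}'+(\Lambda+B/x)\mathbf{y}=O(x^{-4})$ and invert the diagonal linear operator componentwise by variation of parameters. For each $j$, the homogeneous solutions are $e^{-\lambda_j x}x^{-\beta_j}=e^{\mp x}x^{-1/2}$; on the given ray one of these grows exponentially, and the $o(x^{-3})$ hypothesis kills the associated constant of integration, yielding $y_j(x)=O(x^{-4})$. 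Iterating the bootstrap, at each step we pick up one more coefficient that agrees (by uniqueness of the formal series in Proposition \ref{Formh}) with $\tilde{\mathbf{y}}_0$. Thus $\mathbf{y}(x)\sim\tilde{\mathbf{y}}_0(x)$ to all orders along the ray. Differentiability is immediate from solving the ODE for $\mathbf{y}'$: $\mathbf{y}'=-(\Lambda+B/x)\mathbf{y}+\mathbf{g}$ inherits the power-series asymptotics.

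\emph{Part (ii), existence.} I would invoke the Borel summation results of \cite{imrn}: the Borel transform $\mathcal{B}\tilde{\mathbf{y}}_0$ is analytic near the ray of integration for the anti-Stokes direction $-\phi=(2k+1)\pi/2$, has at most exponential growth at infinity, and its Laplace transform $\mathcal{L}_\phi\mathcal{B}\tilde{\mathbf{y}}_0(x)$ defines a genuine solution of \eqref{sysP1}--\eqref{noP1} asymptotic to $\tilde{\mathbf{y}}_0=O(x^{-4})$, hence $o(x^{-2}\ln^2x)$. Via the correspondence \eqref{htoy} this gives precisely a tritronqu\'ee.

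\emph{Part (ii), uniqueness.} Suppose $\mathbf{y}_1,\mathbf{y}_2$ both satisfy the decay bound on $\arg x=(2k+1)\pi/2$. Set $\mathbf{u}=\mathbf{y}_1-\mathbf{y}_2$. Then $\mathbf{u}$ satisfies
\begin{equation*}
\mathbf{u}'+\bigl(\Lambda+B/x+\partial_\mathbf{y}\mathbf{g}(x^{-1},\mathbf{y}_1)\bigr)\mathbf{u}=O(|\mathbf{u}|^2),
\end{equation*}
with $\partial_\mathbf{y}\mathbf{g}=O(x^{-2})+O(\mathbf{y}_1)=o(1)$. A fundamental matrix of the homogeneous linear part has columns asymptotic to $e^{\mp x}x^{-1/2}$; on an anti-Stokes ray $|e^{\pm x}|=1$, so each column has modulus $\sim|x|^{-1/2}$. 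Writing $\mathbf{u}$ via variation of parameters as a combination of these columns plus a small nonlinear remainder (handled by a contraction / Gronwall argument on the integral equation, using that $\mathbf{u}$ is itself small), the decay $\mathbf{u}=o(x^{-2}\ln^2x)$ forces both fundamental coefficients to vanish, hence $\mathbf{u}\equiv 0$. The main obstacle is precisely this step: I must show that the ``beyond-all-orders'' parameters $C_\pm$ of the transseries are classically visible on an anti-Stokes direction. The gap between the decay $|x|^{-1/2}$ of any nonzero exponential correction and the prescribed rate $|x|^{-2}\ln^2|x|$ provides the room, but controlling it cleanly in the presence of the nonlinearity requires a careful integral-equation treatment rather than pure substitution.
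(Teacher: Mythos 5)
Your part (i) is essentially the paper's own argument: rewrite \eqref{sysP1} by variation of parameters as the integral system \eqref{eq:systy1}, note that the $o(x^{-3})$ hypothesis forces the constant multiplying the exponentially growing homogeneous solution to vanish, and bootstrap $\mathbf{g}=O(x^{-4})$ to all orders; the paper makes the bootstrap rigorous via a contraction in the weighted norm $\sup|x^{3}\bfy|$, and differentiability is read off the integral form, exactly as you say. One caveat: for $a=\pm\tfrac{\pi}{2}$ neither exponential grows, but then both homogeneous solutions have modulus $|x|^{-1/2}$, so $o(x^{-3})$ kills both constants and the argument is unchanged; your phrasing only covers the generic ray. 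For part (ii) you take a genuinely different route. The paper does not use Borel summation here at all: it reruns the same contraction with both lower limits of integration in \eqref{eq:systy1} placed at $\infty e^{i\pi/2}$; on the anti-Stokes ray both homogeneous solutions decay only like $|x|^{-1/2}$, incompatible with \eqref{eq:eqasyh}, so both constants vanish and the unique fixed point of the contraction (now in the norm weighted by $x^{2}\ln^{-2}x$) delivers existence and uniqueness in one stroke, keeping the lemma elementary and self-contained. You instead import existence from \cite{imrn} — legitimate, since those are external results not resting on this lemma, though you must cite \cite{imrn} directly and not Theorem \ref{Note43}, whose proof uses the lemma — and prove uniqueness by linearizing the difference of two solutions; the ``careful integral-equation treatment'' you flag is precisely the paper's weighted-norm contraction, and it does close, since the gap between $|x|^{-1/2}$ and $|x|^{-2}\ln^{2}|x|$ annihilates both constants and the resulting sourceless integral equation for the difference is contractive. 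One detail to watch in executing your version: you cannot quote part (i) to get $\bfy_{1,2}=O(x^{-4})$, because \eqref{eq:eqasyh} is weaker than $o(x^{-3})$; after killing both constants one extra bootstrap pass ($\mathbf{g}=O(x^{-4}\ln^{4}x)$, hence $\bfy=O(x^{-4}\ln^{4}x)$, hence $\mathbf{g}=O(x^{-4})$) supplies the needed decay, which is harmless but should be said.
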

  \begin{proof}
  (i) Take $a\in[-\tfrac{\pi}{2},\tfrac{\pi}{2}]$ (the proof for other $a$ is similar). Let $\tfrac{1}{x_0}$ and $\epsilon$ be small enough, where $\arg x_0=a$. We write  \eqref{sysP1} in the integral form
    \begin{align}
      \label{eq:systy1}
      y_1(x)={A_1}x^{-\frac12} e^{-x}+x^{-\frac12}e^{-x}\int_{x_0}^xs^{\frac12}e^s g_1(s^{-1},\bfy(s))ds\nonumber \\  y_2(x)={A_2}x^{-\frac12}e^{x}+x^{-\frac12}e^{x}\int_{\infty e^{i\phi}}^{x}s^{\frac12}e^{-s} g_2(s^{-1},\bfy(s))ds
    \end{align}
Since $\bfy=o(x^{-3})$ and from the properties of $\bf g$  we see that the second integral is  convergent and (again since $\bfy=o(x^{-3})$) we must have ${A}_2=0$. It is straightforward to show that \eqref{eq:systy1} is contractive in the ball of radius $\epsilon$ in  the norm $\|\bfy\|=\sup_{|x|>|x_0|}\|x^3 \bfy(|x|e^{ia})\|$, if $\epsilon$ and $1/|x_0|$ are small. It then follows that $\bfy ={\bf a}_4x^{-4}+o(x^{-4})$, where ${\bf a}_4=\frac{392}{625}(1,1)$; feeding back this estimate into \eqref{eq:systy1} it follows that $\bfy$ is of the form $\bfy ={\bf a}_4x^{-4}+{\bf a}_5x^{-5}+o(x^{-5})$, etc.
Differentiability of the asymptotics follows from the integral form \eqref{eq:systy1}.

(ii) We can w.l.o.g. take $k=0$.  The proof is similar to that of (i), with $O(x^{-2}\ln^2 x)$ replacing $o(x^{-3})$ and the lower limit of integration $x_0$ replaced by $\infty e^{i\pi/2}$.
  \end{proof}
}
\begin{Note}
  {\rm
The scale $x^{-2}\ln^2 x$ is chosen for technical reasons, since \cite{inprep} finds the asymptotics in the pole region up to  $O(x^{-2}\ln^2 x)$ errors.

}
\end{Note}

Next theorem establishes that generalized Borel summation of the transseries \eqref{tranyt}, and of \eqref{tranytt}, produces actual solutions, \eqref{BoSumTrans}, of the system (see also \S\ref{LBsum}).

\begin{Theorem}\label{Note43}
Consider the system \eqref{sysP1}-\eqref{noP1}.

I. Assume  $\arg x\in {[}-\tfrac{\pi}{2},\tfrac{\pi}{2}]$. There exist functions $\mathbf{Y}_k,\, (k\Ge 0)$ such that the following hold.

(i)  ${\bf Y}_k$ are the Borel transforms of $\tilde{\bfy}_k$ and for any $\phi\in [-\tfrac{\pi}{2},0)\cup(0,\tfrac{\pi}{2}{]}$ we have
\begin{equation}
  \label{eq:y_k}
 \mathcal{L}_\phi{\bf Y}_k \sim\tilde{\bfy}_k(x)\ \ \ \ \text{for }x\to\infty,\ \ x\in e^{-i\phi}\RR^+
\end{equation}

  {(ii)  Let $\phi{\in {[}-\tfrac{\pi}{2},\tfrac{\pi}{2}{]}}$. If $\mathbf{y}(x)$ solves (\ref{sysP1}) with
    $\mathbf{y}=o(x^{-3})\ \ \text{for }x\to\infty,\ x\in e^{-i\phi}{\RR^+}$ then  $\mathbf{y}$ has a {\em{unique expansion as a Borel
    summed transseries}}: for some constant $C$
\begin{equation}\label{BoSumTrans}
 {\mathbf{y}}(x;C)=\mathcal{L}_\phi{\bf Y}_0\, (x)\, +\sum_{k=1}^\infty C^ke^{-kx}\,\mathcal{L}_\phi{\bf Y}_k\, (x)\ \ \ \text{for }\ x\in e^{-i\phi}\RR^+,\ |x|\ \text{large}
 \end{equation}
If $\phi=\pm \tfrac{\pi}{2}$ then $C=0$.

(iii) The constant $C$ in \eqref{BoSumTrans} depends on the direction $\phi$, $C=C(\arg x)$, and is piecewise constant; it can only change at the Stokes direction {\rm{arg}}$\, x=0$.

(iv) $\mathbf{Y}_k$ have the following regularity properties:

\begin{enumerate}[(a)]
\item\label{itema}
 $\mathbf{Y}_0(p)=p^3\mathbf{A}_0(p)$, and, for $k\Ge 1$,
$\mathbf{Y}_k(p)=p^{k/2-1}\mathbf{A}_k(p)$, with $\mathbf A_k(p)$ analytic on the universal covering of
$\CC\setminus\{\pm1,\pm2,\ldots\}$; and $\mathbf A_1(0)$ normalized to equal $\mathbf{e}_1$. Also
\bel{Y0pe1}
\mathbf{Y}_0(p)=(1-p)^{-1/2}\,S_Y\,\mathbf{A}(p)+\mathbf{B}(p)
\ee
with $\mathbf{A}(p), \mathbf{B}(p)$ analytic at $p=1$, $\mathbf{A}(p)=\mathbf{e}_1+O(p-1)$, and $S_Y$ is a constant;
\item all $\mathbf{Y}_k(|p|e^{i\phi})$ are
left and right continuous in $\phi$ at $\phi=0$ and $\phi=\pi$ and are in $L^1_{loc}{(\RR_+)}$.

\item
For any $\delta>0$ there is a large enough $b$ so that $\|\mathbf{Y}_k\|_b<\delta^k$ for $k=0,1\ldots$ where $\|f\|_b:=\int_0^\infty e^{-bt}|f(te^{i\phi})|dt$. As a consequence, each $\mathbf{Y}_k$ is Laplace transformable along any direction of argument $\phi\in (0,\pi)\cup(\pi,2\pi)$.

\end{enumerate}
}

II. Analogously, for $x$ with $\arg x\in {[}\tfrac{\pi}{2},\tfrac{3\pi}{2}{]}$ (or $\arg x\in {[}-\tfrac{\pi}{2},-\tfrac{3\pi}{2}{]}$) similar  Borel
    summed transseries representations exist for solutions $\mathbf{y}(x)$ of (\ref{sysP1}) with
    $\mathbf{y}(x)=o(x^{-3})\ \ \text{for }x\to\infty,\ x\in e^{-i\phi}{\RR^+}$.
    The constant $C$ can only change at the Stokes directions {\rm{arg}}$\, x=\pm\pi$.

\end{Theorem}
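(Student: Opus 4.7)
The plan is to verify that system \eqref{sysP1}--\eqref{noP1} satisfies the hypotheses of the general existence-and-resurgence theorems of \cite{imrn} and \cite{duke}, and then to read off the assertions of the theorem from their conclusions. The hypotheses to check are: the eigenvalues $\lambda_1 = 1,\, \lambda_2 = -1$ of $\Lambda$ are distinct and nonzero, so infinity is a rank-one irregular singular point; the $\lambda_i$ are $\mathbb{Z}$-independent modulo the evident reflection $\lambda_2 = -\lambda_1$, in the sense required for transseries indexing; the nonlinearity $\mathbf{g}(x^{-1},\bfy)$ is analytic at the origin with the stated vanishing order $O(x^{-4}) + O(\bfy\, x^{-2}) + O(|\bfy|^2)$; and the diagonal matrix $B = \tfrac12 I$ satisfies the standard nonresonance condition relative to $\Lambda$. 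Each of these is immediate from \eqref{sysP1}--\eqref{noP1}.

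Granting these verifications, parts I(i)--(iii) are essentially direct quotations from \cite{imrn, duke}. The existence of $\mathbf{Y}_k = \mathcal{B}\tilde{\bfy}_k$ with the Laplace transformability \eqref{eq:y_k} follows from the general mode-by-mode Borel-summability construction. The bijection between transseries parameters $C$ and true solutions satisfying $\bfy = o(x^{-3})$ gives (ii); the requirement $C=0$ for $\phi = \pm\tfrac{\pi}{2}$ is because along these antistokes directions the factor $e^{-x}$ is oscillatory rather than decaying, so any nonzero $C$ would contradict the prescribed algebraic decay (consistent with the uniqueness Lemma \ref{L42}(ii)). Piecewise constancy of $C(\phi)$ and confinement of its jumps to the Stokes ray $\arg x = 0$ in (iii) are the defining content of the Stokes phenomenon in this framework.

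The substantive content is in part I(iv). The power prefactors $p^3$ (from $\tilde{\bfy}_0 \sim x^{-4}$) and $p^{k/2 - 1}$ for $k\geq 1$ (from $\tilde{\bfy}_k \sim x^{-k/2}$) are read off directly from the definition \eqref{DefB}. The singularities of the $\mathbf{A}_k$ at $\{\pm 1, \pm 2, \ldots\} = \mathbb{Z}\lambda_1 \cup \mathbb{Z}\lambda_2$, together with analyticity on the universal cover of the complement, constitute the resurgent structure established in \cite{duke}. The form \eqref{Y0pe1} at $p=1$ is dictated by the resurgence bridge: the singular part of $\mathbf{Y}_0$ near $p=1$ is proportional (up to an overall Stokes constant $S_Y$) to $\mathbf{Y}_1$ translated to the origin, and since $\mathbf{Y}_1(p) = p^{-1/2}\mathbf{A}_1(p)$ with $\mathbf{A}_1(0)=\mathbf{e}_1$, this pins down both the exponent $(1-p)^{-1/2}$ and the leading direction $\mathbf{e}_1$. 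Left/right continuity in $\phi$ at $\phi = 0, \pi$ and $L^1_{\mathrm{loc}}$ integrability follow from the regularity of the convolution equations satisfied by the $\mathbf{Y}_k$. The bound $\|\mathbf{Y}_k\|_b < \delta^k$ is proved inductively: the recursion expressing $\mathbf{Y}_{k+1}$ in terms of convolutions of lower-order $\mathbf{Y}_j$, combined with the submultiplicativity of $\|\cdot\|_b$ under convolution, yields geometric decay once $b$ is taken large enough that $\|\mathbf{Y}_0\|_b$ and the effective contraction constant are both sufficiently small.

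The main obstacle is not conceptual but notational: matching our specific values $\lambda_i = \pm 1$ and $\beta_i = \tfrac12$ to the general framework of \cite{imrn, duke}, and carefully propagating the square-root exponent through the resurgent relations to obtain exactly the normalization in \eqref{Y0pe1}. Part II is then obtained by applying part I to the system after the change of variable $x \mapsto x e^{\pm i\pi}$, which sends $\Lambda \mapsto -\Lambda$ and interchanges the roles of the two exponential modes; Stokes directions consequently shift from $\arg x = 0$ to $\arg x = \pm \pi$, and the conjugation symmetry $h(x) \mapsto \overline{h(\overline{x})}$ of \eqref{eq:eqp} ensures the sectorial representations agree on overlaps.
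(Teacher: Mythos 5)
Your proposal is correct and follows essentially the same route as the paper: the paper's proof likewise reduces the statement to the general results of \cite{imrn}, \cite{duke} (citing Theorem 2(ii), Proposition 1 and Proposition 20 of \cite{imrn}), after using Lemma \ref{L42}, Proposition \ref{Formh} and \eqref{eq:trsf3} to identify the unique $\tilde{\bfy}_0$ and the prefactor $p^3$, and it obtains Part II from the symmetry $h(x)\mapsto h(xe^{\pm i\pi})$ exactly as you do. The only point you gloss slightly is the resonance $\lambda_2=-\lambda_1$, for which strict $\ZZ$-independence fails; what is actually used is that the Stokes lines are distinct and that only one exponential mode is present in each half-plane (one of $C_1,C_2$ vanishes), as the paper notes in Proposition \ref{Formh} and its footnote to \eqref{gtrans}.
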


\

{\em{Proof of Theorem\,\ref{Note43}.}} Part {\em{I.}} follows from general results in \cite{imrn}, \cite{duke}; part of the theorem is also found in  \cite{invent}.
More precisely, using Lemma \ref{L42}, Proposition\,\ref{Formh} and \eqref{eq:trsf3} it follows that $\tilde{\bfy}_0$ is unique, so that  {Theorem}\,\ref{Note43} (i)-(ii) follows from Theorem {2} (ii) in \cite{imrn}, while {Theorem}\,\ref{Note43} (iii)  (a),\,(b) follows from Proposition\, 1 in \cite{imrn} and (c) from Proposition\,20 in \cite{imrn}. The fact that $\mathbf{Y}_0(p)=O(p^3)$ as $p\to 0$ follows from $\tilde{\bfy}_0=O(x^{-4})$ (by \eqref{defhh0}, \eqref{eq:trsf3}) and Proposition\, 1\,ii) in \cite{imrn}.

We note that along Stokes directions, when arg$\, x$ is $0$ or $\pm\pi$, the Laplace transform does not exist as a usual integral, and must be considered in a generalized  sense \cite{duke}.

Part {\em{II.}} follows due to the symmetry of \eqref{eq:eqp} under $h(x)\mapsto h(xe^{
      \pm i\pi})$.
$\Box$

\subsubsection{Properties of $H_k$}

\begin{Lemma}\label{Lemma2}
Let $H_k$ be as in \eqref{defHk}.
Then
\begin{equation}\label{asyHk}
\mathcal{L}_\phi H_k\,(x)\sim \tilde{h}_{k}(x); \ \ \ \tilde{h}_{k}(x)=O(x^{-k/2})\ \ \text{for }x\to\infty,\,\arg x\in
(-\tfrac{\pi}{2},\tfrac{\pi}{2})
\end{equation}
and the analytic continuation of $H_k$ have the following regularity properties.

(a) The function $H_0(p)$ satisfies the inverse Laplace transformed equation
\eqref{eq:eqp}:
\begin{equation}
  \label{eq:Bplane}
  (p^2-1)H=\frac{196}{1875}\, p^3+p*H+\frac{1}{2}H*H
\end{equation}
and $H_0$ is the unique solution of \eqref{eq:Bplane} which is analytic at $p=0$.

(b) $H_0$ is an odd function.

(c) ${H}_0(p)=p^3{A_0}(p)$ with $A_0(p)$ analytic on the universal covering of
$\CC\setminus\{\pm1,\pm2,\ldots\}$;

(d) $H_0$ satisfies the statement of Proposition\,\ref{H0_behave}(i), and therefore also ${H}_0(p)=-S\,(1+p)^{-1/2}{A}(-p)-{B}(-p)$ for the same   $S,A,B$ ; also, for $k\geq 1$, $H_k(p)=p^{k/2-1}A_k(p)+B_k(p)$ with $A_k,B_k$ analytic at $p=0$ and $A_1(0)=1$.

(e) $H_0(|p|e^{i\phi})$ is
left and right continuous in $\phi$ at $\phi=0$ and $\phi=\pi$, and is in $L^1_{loc}{(\RR_+)}$.

(f) For any $\delta>0$ there is a large enough $b$ so that $\|H_k\|_b<\delta^k$ for $k=0,1\ldots$ where $\|f\|_b:=\int_0^\infty e^{-bt}|f(te^{i\phi})|dt$. As a consequence, each $H_k$ is Laplace transformable along any direction of argument $\phi\in (0,\pi)\cup(\pi,2\pi)$.

\end{Lemma}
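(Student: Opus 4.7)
The plan is to deduce Lemma \ref{Lemma2} from Theorem \ref{Note43}, which already establishes the analogous facts for the vector-valued Borel transforms $\mathbf{Y}_k$ of the normal form \eqref{sysP1}. The bridge is the linear relation \eqref{htoy}, $h=\tfrac12(1-\tfrac{1}{4x})y_1+\tfrac12(1+\tfrac{1}{4x})y_2$. At the level of Borel transforms, multiplication by $x^{-1}$ in the $x$-plane corresponds to convolution with the constant function $1$, that is, to the integration operator $\int_0^p$. Consequently
\[
H_k(p)=\tfrac12\bigl(Y_{1,k}(p)+Y_{2,k}(p)\bigr)-\tfrac18\int_0^p\bigl(Y_{1,k}(s)-Y_{2,k}(s)\bigr)\,ds,
\]
and most of (c)--(f) and the asymptotics \eqref{asyHk} will be read off from this identity, using that integration is a bounded smoothing operation on the relevant spaces.

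For (a), I would apply the Borel transform term-by-term to \eqref{eq:eqp}: $\partial_x$ becomes multiplication by $p$ (with no boundary contributions, since $\tilde h_0$ starts at $x^{-4}$), products become convolutions, and multiplication by $x^{-1}$ becomes $\int_0^p$. The inhomogeneity $-\tfrac{392}{625}x^{-4}$ transforms to $\tfrac{196}{1875}\,p^3$. The coefficients of an analytic-at-$0$ solution of \eqref{eq:Bplane} are recursively and uniquely determined, and must match the Borel transform of the unique formal series $\tilde h_0$ from Proposition \ref{Formh}(i); this gives uniqueness. For (b), I would use the parity argument: each operation in \eqref{eq:eqp} preserves the parity of powers of $x^{-1}$, and the inhomogeneity is of even parity, so by Proposition \ref{Formh}(i) all nonzero $c_k$ have $k$ even. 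Hence near $p=0$, $H_0(p)=\sum_{k\text{ even}} c_k p^{k-1}/\Gamma(k)$ has only odd powers of $p$ and is odd on a neighborhood of $0$. A cleaner substitution argument shows that $F(p):=-H_0(-p)$ satisfies the same convolution equation \eqref{eq:Bplane}, so by the uniqueness in (a), $F\equiv H_0$, propagating oddness to the entire domain of analytic continuation.

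Items (c)--(f) are then inherited. The $p^3$ factor in $\mathbf{Y}_0=p^3\mathbf{A}_0$ is preserved by the linear combination and even reinforced by the integral term (which raises the order of vanishing at $0$ by one), so $H_0=p^3 A_0$ with $A_0$ analytic on the same universal cover. For $k\ge 1$, the branch factor $p^{k/2-1}$ in $\mathbf{Y}_k$ dominates; the integrated piece contributes only more regular $p^{k/2}$-terms, giving $H_k=p^{k/2-1}A_k(p)+B_k(p)$, with $A_1(0)=1$ inherited from $\mathbf{A}_1(0)=\mathbf{e}_1$ via the prefactor in \eqref{htoy}. For the local structure at $p=1$, integrating $(1-p)^{-1/2}$ against an analytic factor produces $-2(1-p)^{1/2}$ times an analytic factor, which is analytic at $p=1$; thus only the non-integrated half of the identity contributes the square-root singularity, yielding the form in (d). The corresponding statement at $p=-1$ follows from oddness (b). The one-sided continuity (e) and the weighted-$L^1$ bounds (f) transfer directly, since both operations in the identity are bounded in the norm $\|\cdot\|_b$. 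Finally, \eqref{asyHk} follows from Theorem \ref{Note43}(i) and the fact that $\mathcal{L}_\phi\!\int_0^p=x^{-1}\mathcal{L}_\phi$.

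The main bookkeeping obstacle is tracking the integral $\int_0^p$ on the universal cover of $\CC\setminus\{\pm1,\pm2,\ldots\}$, where path choices matter for the monodromy picture; however, for the local statements needed in (c), (d) it suffices to integrate in a simply connected neighborhood of each singular point, where the smoothing effect is transparent. A secondary point is verifying that the normalization $A_1(0)=1$ (not merely a nonzero constant) survives the linear combination, which is a direct computation using $\mathbf{A}_1(0)=\mathbf{e}_1$ and the fact that only the first coordinate contributes at leading order in \eqref{htoy}.
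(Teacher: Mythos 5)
Your proposal follows essentially the same route as the paper's proof: it derives the Borel-plane identity $H_k=\tfrac12\left(Y_{k;1}+Y_{k;2}\right)+\tfrac18\int_0^p\left(-Y_{k;1}(q)+Y_{k;2}(q)\right)dq$ from \eqref{htoy}, transfers \eqref{asyHk} and properties (c)--(f) from Theorem\,\ref{Note43} (using that multiplication by $x^{-1}$ corresponds to $1*{}$ in the Borel plane), and proves (a),(b) via the inverse-Laplace-transformed equation, uniqueness of the solution analytic at $p=0$, and the substitution $H\mapsto-H(-p)$. Apart from sketch-level imprecisions (e.g. $\partial_x$ corresponds to multiplication by $-p$, not $p$; $(1-p)^{1/2}$ is not analytic at $p=1$, though its contribution is correctly absorbed into the $(1-p)^{-1/2}A(p)$ part; and the normalization $A_1(0)=1$ is a convention to be checked against the chosen scaling of $C$), this is the paper's argument.
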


 {\em{Proof.}}

 Solutions $h(x)$ are obtained from solutions of \eqref{p1} via \eqref{chvarx}. Using the known asymptotic forms of solutions of  \eqref{p1}, see \cite{Clarkson}, it follows that \eqref{assph} implies that $h(x)=O(x^{-4})$ in the same sector, hence $\bfy$ given by \eqref{eq:trsf3} is $o(x^{-3})$.

Note that we have, in view of \eqref{htoy},
\bel{like28}
\tilde{h}_k=\frac{1}{2}\left(1-\frac{1}{4x}\right)\tilde{y}_{k;1}+\frac{1}{2}\left(1+\frac{1}{4x}\right)\tilde{y}_{k;2}
\ee
(where $\tilde{\bfy}_{k}=(\tilde{y}_{k;1},\tilde{y}_{k;2})^T$). We have (see \eqref{xLBxm})
\bel{like29}
\mathcal{L}_\phi H_k\, (x)=\mathcal{L}_\phi\mathcal{B}\tilde{h}_k\,(x)=\frac{1}{2}\left(1-\frac{1}{4x}\right)y_{k;1}(x)+\frac{1}{2}\left(1+\frac{1}{4x}\right)y_{k;2}(x)=h_k(x)
\ee
Using Theorem \ref{Note43} this implies \eqref{asyHk}.

Relation \eqref{eq:Bplane} follows from the fact that $H_0=\mathcal{B}\tilde{h}_0$, hence it satisfies the inverse Laplace transformed equation \eqref{eq:eqp}.

(b) Equation \eqref{eq:Bplane} has a unique solution analytic at $p=0$ \cite{imrn} and it can be easily checked that if $H(p)$ is a solution, then so is $-H(-p)$.

To prove the other properties, note that we have, using \eqref{like28}, and the fact that $\mathcal{B}(x^{-1}\tilde{\mathbf{Y}}_k)=1*\mathcal{B}(\tilde{\mathbf{Y}}_k)$,
\begin{multline}\label{e42}
H_k=\mathcal{B}\tilde{h}_k=\frac{1}{2}\left(Y_{k;1}-\frac{1}{4}\,1*Y_{k;1}\right)+\frac{1}{2}\left(Y_{k;2}+\frac{1}{4}\,1*Y_{k;2}\right)\\
=\frac{1}{2}\left(Y_{k;1}+Y_{k;2}\right)+\frac{1}{8}\int_0^p \left(-Y_{k;1}(q)+Y_{k;2}(q)\right)\, dq
\end{multline}
and the properties follow from {Theorem}\,\ref{Note43}.
$\Box$

\subsubsection{Proof of Theorem\,\ref{asir2}}
I. follows from Lemma\,\ref{Lemma2}, Theorem\,\ref{Note43} and \eqref{like28}-\eqref{e42}.

II. follows due to the symmetry of \eqref{eq:eqp} under $h(x)\mapsto h(xe^{
      \pm i\pi})$.

\subsection{Proof of Proposition\,\ref{H0_behave}}\label{PfP3}

These statements are true in a general setting, see \cite{imrn}; see also \S\ref{Sbeta} for an overview in the general case and more details. The main ideas are as follows.

(i) was proved in Lemma\,\ref{Lemma2}.

(ii) By rotating the angle $\phi$ into $\phi\in[-\pi, -\frac{\pi}{2}]$, and using the estimates of Lemma\,\ref{Lemma2}(c),(d)
it is clear that $h^+(x)$ can be analytically continued for $x$ in the second quadrant. Continuation of $h^+(x)$ for $x$ in the fourth quadrant is done by deformation of the path of integration $\arg p=\phi<0$ to a direction with $\arg p>0$ plus an infinite sum of paths coming from $\infty$ in the first quadrant, encircling only one point $k\in\ZZ_+$ counterclockwise, and going back to $\infty$. The series obtained converges due to Lemma\,\ref{Lemma2}(d) and we obtain a Borel summed transseries for this continuation of $h^+(x)$. The analytic continuation of $h^-(x)$ is similar.

Formula \eqref{jumph01} is proved below, but it is more general, see \S\ref{Sbeta}
 in the Appendix.

We have
 $h^+(x)-h^-(x)=\int_\ell e^{-xp}H_0(p)\, dp$ where $\ell$ is a path coming from $+\infty$ above $[1,+\infty)$, going counterclockwise around $p=1$ and returning to $+\infty$ below $[1,+\infty)$. Then, choosing the usual branch of the radical (with $(1-p)^{1/2}>0$ for $p<1$) we have
 \begin{multline}
 h^+(x)-h^-(x)\sim S\int_\ell e^{-xp}(1-p)^{-1/2}\, dp\\
 =S\int_{+\infty}^1i(p-1)^{-1/2}e^{-xp}\, dp+S\int_1^{+\infty}(-i)(p-1)^{-1/2}e^{-xp}\, dp\\
 =-2i\sqrt{\pi}Se^{-x}x^{-1/2}
 \end{multline}

(iii) Similar arguments for $\mathcal{L}_{\phi}H_1\,(x)$ show that the analytic continuation produces only terms of order $e^{-2x}$ or smaller, see Theorem\,\ref{Note43}(iv)(a), \eqref{like28},\,\eqref{like29}.
$\Box$

\subsection{Specification of branches and proof of Theorem\,\ref{TritCis0}}\label{PfTritCis0}

\subsubsection{Branches}\label{brachoise} We recall that $H_0(p)$ is analytic at $p=0$, see Lemma\,\ref{Lemma2}(c). Each directional Laplace transform of $H_0$ uses the analytic continuation of this germ of analytic function at $p=0$ along the direction of the transform.

$H_k(p)$ with $k\geq 1$ may have a square root branch point at $p=0$, see Lemma\,\ref{Lemma2}(d). We use the analytic continuation of the usual branch of the square root, with $\arg p=0$ for $p>0$; for instance in the integral in the third  expansion in \eqref{ht_trans}, the functions are continued through clockwise rotation, starting with $\arg p=0$.

\subsubsection{Proof of Theorem\,\ref{TritCis0}}
(i) By Theorem\,\ref{asir2} a tritronqu\'ee \eqref{sectorinz} has a series representation of the form \eqref{eq:transh} for some $C_+$.
This $C_+$ must be zero, otherwise $h_t$ has poles beyond $\arg x=\tfrac{\pi}{2}$, by Proposition \ref{RegP}, hence it does not correspond to the tritronqu\'ee \eqref{sectorinz}.

 (ii) Note that for $x$ in the second quadrant ($\arg x=\pi-\epsilon$), $h^+=\mathcal{L}_\phi H_0$ with $\phi=-\pi+\epsilon$ and that after clockwise rotation in the $p$-plane we have  $(1+p)^{1/2}=i |1+p|^{1/2}$ for $p<-1$.

We have $h^+(x)-\hsig(x)=\int_{\tilde{\ell}} e^{-xp}H_0(p)\, dp$ where $\tilde{\ell}$ is a path coming from $-\infty$ above $(-\infty,-1]$, going clockwise around $p=-1$ and returning to $-\infty$ below $(-\infty,-1]$. Then
 \begin{equation}
 h^+(x)-\hsig(x)\sim
 2i\int_{-1}^{-\infty} |p+1|^{-1/2} e^{-xp}(-S)\, dp=2iS\sqrt{\pi}e^x|x|^{-1/2}
 \end{equation}
 which gives \eqref{jumphpi} (noting that analytic continuation in $x$ is counterclockwise).

 (iii) Formula \eqref{ht_trans} for $|\arg x|\Le\tfrac{\pi}{2}$ follows by \eqref{eq:transh}, since $C_+=0$ by (i) and  by Proposition\,\ref{H0_behave}(iii).

 Then, for $\arg x\in [\tfrac{\pi}{2},\pi)$, in $h^+=\mathcal{L}_\phi H_0$ can be analytically continued  by rotating of $\phi$, since $H_0$ is analytic and Laplace transformable by Lemma\,\ref{Lemma2}(c),(f).

To obtain the series \eqref{ht_trans} for $\arg x\in (\pi, \tfrac{3\pi}{2})$ we use (ii) and the proof is analogous to the proof of  Proposition\,\ref{H0_behave} (ii), only the branches are different; alternatively, this follows from the symmetry $h(x)\to\overline{h(-\overline{x})}$.

\subsection{Proof of Theorem\,\ref{FAPIn}}\label{PfRem8}

 (i) is proved in \cite{invent}. We include the main steps in the calculation of $F_n$ in \S\ref{CalcHn}.

 (ii) It is straightforward to check that the transformation
  $g=h(1+\frac13h)^{-1}$ leads to a system of equations satisfying the same assumptions as  \eqref{sysP1}, and the construction of the expansion  \eqref{eq:devh1} mirrors the one for \eqref{eq:eq51}. Expectedly, the expansion in \eqref{eq:devh1} coincides with the formal asymptotic expansion of $h(1+\frac13h)^{-1}$ in powers of $1/x$ using \eqref{eq:eq51}.

\subsection{Proof of Proposition \ref{RegP}}\label{PfPr10}
The fact that this is the first array of poles is guaranteed by \eqref{eq:eq51} which shows that for $|\xi|<a<12$, $h$ is bounded.

{(i)  Let $g$ be as in Theorem\,\eqref{FAPIn}(ii). We first note that a pole of $h$ is a regular point of $g$, one in which $g$ assumes the value $3$.
Formula \eqref{eq:pospoles} is simply obtained from the implicit function theorem and the expansion of $g$ in
Theorem \cite{invent} (ii), by solving the implicit equation $g(x)-3=0$, writing $\ln\xi\sim \ln 12+2 n\pi i...$ and  retaining three orders in $n^{-1}$ in the calculation.}

{(ii) The roots $r_{1,2}$  of the quadratic $H_0(r)=y$ satisfy $r_1 r_2=12$. Thus
$H_0$ maps the disk $\{z:|z|\le 12\}$ onto the Riemann sphere $\CC\cup\{\infty\}$. If $\mathcal{A}(0)=0,\mathcal{A}'(0)=1$ and $\mathcal{A}$ is analytic, it follows that  $\mathcal{A}(h(x))$ is singular in $\{z:|z|\le 12\}$. The rest is immediate.}

\section{Appendices}\label{Appendix}

Sections \S\ref{IntroTran}-\ref{Sbeta} contain an outline of some results found in \cite{imrn},\,\cite{duke} and illustration of these results on some simple examples. Section \S\ref{historic} contains a brief overview on the development of the subject.

\subsection{Representation of solutions as transseries}\label{IntroTran}

Very few differential equations can be explicitly solved, and even when this is possible, their expression may be too complicated for easily extracting useful information about solutions; by contrast, formal solutions can often be obtained algorithmically as asymptotic expansions, from which properties of solutions such as rate of decay/increase, or approximations, can be easily read. Sometimes, free parameters are "hidden" beyond all orders of a classical asymptotic series; in such cases transseries are instrumental in uncovering these parameters.

It is well known that equations written in terms of analytic functions have convergent power series solutions at any regular point of the equation; convergent expansions for solutions also exist at regular singularities (generically). But at irregular singularities the asymptotic expansions of solutions are often divergent.

To illustrate, consider the simple equation
\bel{eq1}
y'+y=\frac{1}{x^2}
\ee
The point $x=\infty$ is an irregular singular point of the equation (indeed, the substitution $x=1/z$ maps  $\infty$ to $0$  and  brings \eqref{eq1} to the form $-z^2\tfrac{dy}{dz}+y=z^2$ for which $z=0$ is an irregular singularity).

It is easy to see that there exists a unique asymptotic power series solution for $x\to\infty$,
\bel{soleq0}
\tilde{y}_0(x)=\sum_{n=2}^\infty\frac{(n-1)!}{x^n}
\ee
and it is divergent. On the other hand the general solution of \eqref{eq1} is
\bel{soleq1}
y(x;C)=y_0(x)+Ce^{-x},\ \ \ \text{where\ \  }y_0(x)=e^{-x}\int_{x_1}^x\frac{e^s}{s^2}\, ds\,\sim\,\tilde{y}_0(x)\ \text{as\ }x\to+\infty
\ee
and $C$ is a free parameter.

This simple example illustrates main phenomena at irregular singularities: the power series solutions are divergent, there is loss of information (in \eqref{soleq1} there is a one parameter family of solutions asymptotic to the same power series), and asymptoticity holds only in sectors ($y_0(x)\sim \tilde{y}_0$ only for $x\to\infty$ with $|\arg x|<\tfrac{\pi}{2}$).

In view of \eqref{soleq0},\eqref{soleq1} it is natural to
consider that the {\em{complete formal solution}} of \eqref{eq1} is
\bel{hsoleq1}
\tilde{y}(x)=\tilde{y}_0(x)+Ce^{-x},\ \ \text{for }\ x\to +\infty
\ee
The formal expression \eqref{hsoleq1}, which satisfies \eqref{eq1}, is not an asymptotic series in the sense of Poincar\'e if $|\arg x|<\tfrac{\pi}{2}$, as the term $Ce^{-x}$ is much smaller than all the powers of $x$ in $\tilde{y}_0(x)$: it is a {\em{term beyond all orders}} of the main series. The formal solution \eqref{hsoleq1} is the simplest example of a {\em{transseries}}.

Consider next a nonlinear example, namely \eqref{eq1} plus a nonlinear term:
\bel{neq1}
y'+y=\frac{1}{x^2}+y^4
\ee
Again, equation \eqref{neq1} has a unique power series solution
$\tilde{y}_0(x)=\frac{1}{x^2}+\frac{2}{x^3}+\frac{6}{x^4}+\ldots$ which can be shown to be divergent too. To find possible further terms in a formal expansion we search for a perturbation: substituting $y=\tilde{y}_0+\delta$ in \eqref{neq1} and using the fact that $\tilde{y}_0$ is already a formal solution we get $\delta'+\delta\sim 2\tilde{y}_0\delta$, giving $\delta\sim Ce^{-x}\tilde{y}_1(x)$ where $\tilde{y}_1(x)$ is a (divergent) power series. Since $\tilde{y}_0+Ce^{-x}\tilde{y}_1(x)$ does not solve the equation, further corrections are required, yielding a complete formal solution of \eqref{neq1} in the form
\bel{hnsoleq1}
\tilde{y}=\tilde{y}(x;C)=\, \tilde{y}_0(x)    +C{\rm{e}}^{-x}\tilde{y}_1(x) +  C^2{\rm{e}}^{-2x}\tilde{y}_2(x)+\ldots
\ee
where $\tilde{y}_k(x)$ are divergent power series and $C$ is an arbitrary parameter. The formal solution \eqref{hnsoleq1} is a transseries for $x\to\infty$ along directions in the complex plane for which the terms can be well ordered decreasingly, namely for $x\in e^{ia}\RR_+$ with $ |a|<\tfrac{\pi}{2}$.

Scalar equations more general than \eqref{neq1}, of the form
\bel{gneq1}
y'+\left(\lambda-\frac{\alpha}{x}\right)y=g(x^{-1},y)\ \ \text{with } g=O(x^{-2})+O(y^2)\ \ \text{for } x\to\infty,\, y\to 0
\ee
have formal series solution of the form
\bel{gnsoleq1}
\tilde{y}=\tilde{y}(x;C)=\,\tilde{y}_0(x)+\sum_ {k=1}^\infty C^k{\rm{e}}^{-\lambda kx}\tilde{y}_k(x),\ \ \text{with }\ \tilde{y}_k(x)=x^{k\alpha}\tilde{s}_k(x)
\ee
(with $\tilde{s}_k(x)$ an integer power series in $x^{-1}$) which is a transseries for $x\to\infty$ along any direction for which $|\arg (\lambda x)|<\tfrac{\pi}{2}$.

Systems have transseries solutions which are similar: generic equations can be brought to the normal form (in \cite{imrn} this is eq. (1.1))
\bel{normform}
\begin{array}{l}
\displaystyle{ \bfy'+\left(\Lambda-\frac{1}{x}A\right)\bfy=\mathbf{g}(x^{-1},\bfy)}\\
\ \\
\displaystyle{ \text{with }\mathbf{g}=O(x^{-2})+O(|\bfy|^2)\ \ \text{for } x\to\infty,\, |\bfy|\to 0}\\
\ \\
\displaystyle{\text{and }\Lambda=\rm{diag}(\lambda_1,\ldots,\lambda_d),\ A=\rm{diag}(\alpha_1,\ldots,\alpha_d)}
\end{array}
\ee
Under appropriate nonresonance conditions\footnote{It is required that $\lambda_1,\ldots,\lambda_d$ be linearly independent over $\ZZ$ (otherwise its expression has a slightly more general form) and it suffices that the Stokes lines be distinct.} systems  \eqref{normform} have formal solutions
\bel{gtrans}
\tilde{\bfy}=\tilde{\bfy}(x;\mathbf{C})=\,\tilde{\bfy}_\mathbf{0}(x)+\sum_ {\mathbf{k}\in\NN^d\setminus\mathbf{0}} \mathbf{C}^\mathbf{k}{\rm{e}}^{-\boldsymbol\lambda \cdot \mathbf{k}x} \tilde{\bfy}_\mathbf{k}(x)   \ \ \text{with }\tilde{\bfy}_\mathbf{k}(x)=x^{\boldsymbol\alpha\cdot \mathbf{k}}\tilde{\bfs}_\mathbf{k}(x)
\ee
which is a transseries for $x\to\infty$ along any direction along which the terms can be well ordered, meaning that all the exponentials are decaying, therefore along any direction in the sector
\bel{Strans}
\displaystyle{S_{trans}=\{x\in\CC\, |\, \Re(\lam_j x)>0\ {\rm{for\ all\,}}j\ {\rm{with\ }}C_j\ne 0\} }
\ee

\subsection{Correspondence between transseries and actual solutions: generalized Borel summation}\label{LBsum}

Consider the linear equation \eqref{eq1}. Since its formal solution \eqref{soleq0} is factorially divergent and $\mathcal{L}(p^{n-1})=(n-1)!x^{-n}$, heuristically, it is natural to attempt to write $\tilde{y}_0$ as a Laplace transform: this is central to Borel summation.

Recall that the Borel transform is defined as the formal inverse Laplace transform,
$\mathcal{B}(x^{-\alpha})=\tfrac{p^{\alpha-1}}{\Gamma(\alpha)}$ for $\alpha>0$ (where $\mathcal{L}$ is the Laplace transform), and, more generally, we have \eqref{DefB}.

Taking the inverse Laplace transform, equation \eqref{eq1} becomes
 $(1-p)Y(p)=p$ therefore
 \bel{Ltranex1}
 y=\mathcal{L}_\phi Y\ \ \ \ \text{where }Y(p)=\frac{p}{1-p}
 \ee
and $\mathcal{L}_\phi$ is the Laplace transform along a direction of argument $\phi$, see \eqref{DefLphi}.

Note that we cannot take the Laplace transform along $\RR_+$ in \eqref{Ltranex1}(except in the sense of distributions \cite{duke}), but
we can integrate on any half-lines above, or below $\RR_+$, obtaining
$$y_0^+(x)=\mathcal{L}_\phi Y(x)\ \ \ \ \text{for }-\phi=\arg x\in(0,\frac{\pi}{2})$$
and
$$y_0^-(x)=\mathcal{L}_\phi Y(x)\ \ \ \ \text{for }-\phi=\arg x\in(-\frac{\pi}{2},0)$$
The values of $y_0^\pm(x)$, do not depend on the value of $\phi$ in its specific quadrant; they can be both analytically continued in the right half-plane (and beyond) $ y_0^+(x)\ne y_0^-(x)$ and in fact the difference $ \tfrac{1}{2\pi i}(y_0^+(x)- y_0^-(x))= e^{-x}$ recovers the exponentially small term in \eqref{soleq1}.

These facts generalize to nonlinear equations. Consider the example \eqref{neq1};
taking the inverse Laplace transform one obtains the convolution equation
\bel{Lneq}
(1-p)Y(p)=p+Y^{*4}(p)
\ee
which has a unique solution $Y=Y_0(p)$ analytic at $p=0$. In fact, $Y_0(p)$ is analytic for $|p|<1$, and it is singular at $p=1$.
Due to the convolution term  in \eqref{Lneq},the singularity at $p=1$ gives rise to  an equally spaced array of singularities in the Borel plane at $p=2,3,4,\ldots$. $Y_0(p)$ is analytic along any direction $p=|p|e^{i\phi}$ with $0<|\phi|<\tfrac{\pi}{2}$, is Laplace transformable, and
$y_0=\mathcal{L}_\phi Y_0=\mathcal{L}_\phi\mathcal{B}\tilde{y}_0$ is an actual solution of  \eqref{neq1} for $x$ large with $\arg x=-\phi$: $y_0(x)$ is the Borel summation, along the direction of $x$, of $\tilde{y}_0(x)$.

The Borel sum  $y_0^+=\mathcal{L}_\phi\mathcal{B}\tilde{y}_0$ of $\tilde{y_0}$ is the same for all $-\phi=\arg x\in (0,\tfrac{\pi}{2})$ and $y_0^-=\mathcal{L}_\phi\mathcal{B}\tilde{y}_0$ is the same for all $-\phi=\arg x\in (-\tfrac{\pi}{2},0)$, both $y_0^\pm$ can be analytically continued in the right  half-plane, and $y_0^+-y_0^-$ is exponentially small.

The other series $\tilde{y}_k$ in \eqref{hnsoleq1} are Borel summed similarly (using convolutions equations which are found for $Y_k=\mathcal{B}\tilde{y}_k$ in \cite{imrn},\,\cite{duke}), yielding functions
$y_k=\mathcal{L}_\phi Y_k=\mathcal{L}_\phi\mathcal{B}\tilde{y}_k$ analytic for large $x$; the series
$$y_0(x)+\sum_{k=1}^\infty C^ke^{-kx}y_k(x)$$
converges for $x$ large with $\arg x=-\phi\in(-\tfrac{\pi}{2},0)\cup(0,\tfrac{\pi}{2})$ to a solution of \eqref{neq1}\footnote{For $\arg x=0$ solutions are obtained using weighted averages of Laplace transforms along paths going toward $\infty$ avoiding the singularities in prescribed ways, independent of the equation.} \cite{imrn},\,\cite{duke}.

The general one-dimensional case \eqref{gneq1} is similar; $Y_0(p)$ will have equally spaced arrays of singularities along $\arg p=\arg \lambda$. Along any other direction $p=|p|e^{i\phi}$ with $0<|\phi-\arg\lambda|<\tfrac{\pi}{2}$ the continuation of $Y_0(p)$ is analytic (generally on a Riemann surface) and Laplace transformable, and $\mathcal{L}_\phi Y_0=\mathcal{L}_\phi\mathcal{B}\tilde{y}_0$ is an actual solution of  \eqref{gneq1} for $x$ large with $\arg x=-\phi$.

{\bf{Remark.}} {\em To Borel sum the series in \eqref{gnsoleq1} for $k\Ge 1$ we may consider $y_k=\mathcal{L}_\phi\mathcal{B}\tilde{y}_k$ (if $\alpha<0$), or we can choose any $m$ (large enough so that $\alpha-m<0$), find solutions as Borel summed transseries in the form
\bel{Bstm}
 y_0(x)+\sum_{k=1}^\infty C^ke^{-kx}x^{mk}\mathcal{L}_\phi\mathcal{B}(x^{-mk}\tilde{y}_k)
 \ee
The final result does not depend on $m$, since
\bel{xLBxm}
x^N\mathcal{L}_\phi\mathcal{B}(x^{-N}x^{-n})=\mathcal{L}_\phi\mathcal{B}(x^{-n})
\ee
}

\z Finally, {the series} \eqref{Bstm} {converges} to actual solutions for $x\in S_{an}$ where
$$\displaystyle{ S_{an}=\{x\, \big|\, -\frac{\pi}{2}+\epsilon<\arg(x)<\frac{\pi}{2}-\epsilon,\ |x|>R\} }$$

Generic nonlinear equations \eqref{normform} have their transseries solutions \eqref{gtrans} summed similarly along directions $d$ in $\CC$. Furthermore, there exists a one-to-one correspondence between
 solutions s.t. $\bfy(x)\to 0$ ($ x\in d,\,x\to\infty$) and
 (generalized) Borel sums of $\tilde{\bfy}(x;C)$ transseries solutions along $d$.
  These solutions $\bfy(x;C)$ are analytic in a sector containing $d$ for $|x|$ large. These results are stated and proved in \cite{imrn}, \cite{duke}.

Theorem\,\ref{Note43} is an application of these results for the system (\ref{sysP1}) associated to the Painlev\'e equation \P1.

\subsection{The Stokes phenomenon}\label{Sbeta}

The directions $\pm i\overline{\lam}_j\RR_+$ are called {\em{antistokes lines}}; along these directions, some exponential $e^{-\lambda_jx}$ in \eqref{gtrans} is purely oscillatory. Antistokes directions border the sectors where transseries exist, \eqref{Strans}. Directions with $\lambda_jx\in\RR_+$ (for some $j$) are called {\em{Stokes lines}}; along these, some exponential $e^{-\lambda_jx}$ has fastest decay. At Stokes directions the constants beyond all order in the one-to-one association between small solutions and transseries may change: this is the {\em{Stokes phenomenon}}.

To illustrate this consider \eqref{eq1}. As noted above, solutions can be written using $Y_0$, the Borel sum of $\tilde{y}_0$ as
 \begin{equation}\label{eq:try}
    y(x)=\left\{
    \begin{array}{l}
    \mathcal{L}_\phi Y_0\,(x)+ C_+ e^{-x}\ \ \ \text{for }
   - \phi= \arg x\in(0,\tfrac{\pi}{2})\\
    \mathcal{L}_\phi Y_0\,(x)+ C_- e^{-x}\ \ \ \text{for }
   - \phi= \arg x\in(-\tfrac{\pi}{2},0)
    \end{array}\right.
  \end{equation}

The value of the jump in the constant beyond all orders, $C_+ -C_-$, is called the {\em{Stokes constant}}.

More generally, a fixed solution of \eqref{gneq1} can be written as Borel summed transseries \eqref{Bstm} for some fixed $C$ for all $\phi$ with $\arg\phi\in\arg\lambda+(0,\tfrac{\pi}{2})$, and with a different $C$ for all $\phi$ with $\arg\phi\in\arg\lambda+(-\tfrac{\pi}{2},0)$.

 For general equations the situation is similar: the vector parameter $\mathbf{C}$ in a transseries \eqref{gtrans} associated via Borel summation  along a direction to a true solution does not change when this direction varies between two consecutive Stokes or antistokes lines, but it generally changes across a  Stokes line.

 Consider systems \eqref{normform}, with $\lambda_1=1$, $|\lambda_j|\Ge 1$ and $\beta:=\beta_1<1$ (which can be arranged by a suitable substitution) and solutions obtained by Borel summation of the transseries solution \eqref{gtrans} along directions slightly above and below the Stokes line $\arg x=0$:
 \begin{equation}\label{GenTrans}
    \bfy(x)=\left\{
    \begin{array}{l}
    \mathcal{L}_\phi \mathbf{Y}_\mathbf{0}\,(x)+ \sum_ {\mathbf{k}\in\NN^d\setminus\mathbf{0}} \mathbf{C_+}^\mathbf{k}{\rm{e}}^{-\boldsymbol\lambda \cdot \mathbf{k}x}  \mathcal{L}_\phi\mathbf{Y}_\mathbf{k}(x)   \ \ \text{for }
   - \phi= \arg x\in(0,a_2)\\
    \mathcal{L}_\phi \mathbf{Y}_\mathbf{0}\,(x)+ \sum_ {\mathbf{k}\in\NN^d\setminus\mathbf{0}} \mathbf{C_-}^\mathbf{k}{\rm{e}}^{-\boldsymbol\lambda \cdot \mathbf{k}x}  \mathcal{L}_\phi\mathbf{Y}_\mathbf{k}(x) \ \ \ \text{for }
   - \phi= \arg x\in(a_1,0)
    \end{array}\right.
  \end{equation}
where $\mathbf{Y}_\mathbf{k}=\mathcal{B}_\phi\tilde{\bfy}_\mathbf{k}$ (is the analytic continuation of the Borel transform of $\tilde{\bfy}_\mathbf{k}$ along the direction of argument $\phi$), and the sector $a_1<\arg x<a_2$ does not contain another
 Stokes or antistokes line besides $\arg x=0$.

The first component $C_1$ of the constant beyond all orders in \eqref{GenTrans} changes when $\arg x$ crosses the Stokes line $\arg x=0$, corresponding to $\lambda_1=1$ \cite{duke}.

Changes in the constant beyond all orders occur upon analytic continuation across*  a Stokes line; the leading order change, which is exponentially small, is due to the continuation of $\mathcal{L}_\phi\mathbf{Y}_\bfk$.
The continuations of $\mathcal{L}_\phi\mathbf{Y}_\bfk$  generally add further, but these are of order $e^{-x}$ or smaller, and for $|\bfk|\Ge 1$, the $\mathcal{L}_\phi\mathbf{Y}_\bfk$ already multiplies an exponential, so this change does not affect the coefficient of $e^{-x}$. The fact that the changes in all $\mathbf{C}^\bfk$ with $|\bfk|\Ge 1$ match to give an overall jump equivalent to  $\mathbf C_+\to \mathbf C_-$ is due to the so-called {\em{resurgence}}, which links the singularities of all $\mathbf{Y}_\bfk$ in a precise manner.)

\subsubsection{The Stokes multiplier}
A calculation analogous to the one in the proof of Proposition\,\ref{H0_behave} gives the change in $C_1$, and the argument is as follows.
To analytically continue $\mathcal{L}_{0^-}\mathbf{Y}_0(x)$ past $\arg x=0$ we write $\mathcal{L}_{0^-}\mathbf{Y}_0=\mathcal{L}_{0^+}\mathbf{Y}_0+\delta$ where $\delta=(\mathcal{L}_{0^-}-\mathcal{L}_{0^+})\mathbf{Y}_0$. Since $\mathbf{Y}_0(p)$ is analytic for $|p|<1$ (by \cite{imrn} Proposition 1), the path of integration in $\delta$ can be deformed to the path from $\infty$ to $1$ below $[1,+\infty)$, going around $1$ and then going  to $+\infty$ above $[1,+\infty)$.

Using the fact that $\mathbf{Y}_0(p)=S_{\beta}(1-p)^{\beta-1} (\mathbf{e}_1+o(1))$ (by \cite{imrn} Proposition 1), and that $(1-p)^{\beta-1}=e^{\mp i\pi(\beta-1)}|1-p|^{\beta-1}$ for $|p|>1, \arg(p)=\pm 0$ we obtain, using Watson's Lemma,
$$\delta= -2i S_{\beta} \sin (\pi \beta) \int_1^\infty |1-p|^{\beta-1}e^{-px}dp (\mathbf{e}_1+o(1))=-2iS_{\beta} \sin (\pi \beta)\Gamma(\beta) e^{-x}x^{-\beta} (\mathbf{e}_1+o(1))$$
so that the jump in the constant $C_1$ across the Stokes line $\arg x=0$ is\footnote{We note that in \cite{imrn}, the constants $C(0\pm)$ correspond here to $C_\mp$, that in the formula below
  (1.15), the factor $\Gamma(\beta)$ is missing, in (1.19) $S_\beta$ should be $S$, and in (1.2) $\beta=\hat{B}_{1,1}$ with $\Re \beta\in (0,1]$.}
\begin{equation}
  \label{eq:sjump}
C_{1;+}-C_{1;-}=-S=-\mu\ \text{with}\  S=2iS_{\beta} \sin (\pi \beta)\Gamma(\beta),\ \ \beta=\beta_1;\ \ \ \Re(\beta)\in (0,1)
\end{equation}

\

For  general equations, the values of Stokes constants are transcendental.

\begin{Note}\label{SpecDir} The five special directions of \P1 are Stokes or antistokes lines of its normalized form
\eqref{eq:eqp}.
 \end{Note}

 \subsection{Further references.}\label{historic}      Double expansions of solutions of linear equations as power series multiplying exponentials have been studied starting with Fabry  \cite{Fabry} (1885), and then Cope \cite{Cope} (1936).
 Iwano  (1957-'59) analyzed  solutions of nonlinear systems as a convergent series of functions analytic in sectors, multiplying exponentials \cite{Iwano}. The subject has been developed and expanded substantially after the fundamental work of Ecalle (1981), with results in multisummability of power series of linear ODEs \cite{BBRS}, nonlinear ones, \cite{Bra}, transseries of nonlinear ODEs \cite{imrn},\cite{duke},  similar results for discrete equations \cite{Bra_dis},\cite{Bra_dis2}  and for PDEs \cite{OC-ST-PDE}; singularity formation near antistokes lines was studied in a general setting in \cite{invent}.

\subsection{Rewriting \eqref{eq:eqp} as a normalized system}\label{rewrP1}

We write \eqref{eq:eqp} as usual,
\begin{equation}
  \label{eq:eqsys1}
  \begin{pmatrix}
    h\\ h'
  \end{pmatrix}'=
  \begin{pmatrix}
    0\\\tfrac{392}{625}x^{-4}
  \end{pmatrix}+
  \begin{pmatrix}
    0 & 1\\1 &0
  \end{pmatrix}
\begin{pmatrix}
    h\\ h'
  \end{pmatrix}+
  \begin{pmatrix}
    0 & 0 \\ 0 & -\tfrac{1}{x}
  \end{pmatrix}\begin{pmatrix}
    h\\ h'
  \end{pmatrix}+
  \begin{pmatrix}
    0\\\tfrac12 h^2
  \end{pmatrix}
\end{equation}
The transformation
\begin{equation}
  \label{eq:trsf3}
  \begin{pmatrix}
    h\\ h'
  \end{pmatrix}=
   \frac{1}{2}\begin{pmatrix}
   1-\tfrac{1}{4x} & 1+\tfrac{1}{4x}\\ -1-\tfrac{1}{4x} &  1-\tfrac{1}{4x}
  \end{pmatrix}
  \begin{pmatrix}
    y_1\\y_2
  \end{pmatrix}
\end{equation}
brings (\ref{eq:eqsys1}) to \eqref{sysP1},\,\eqref{noP1}, which is in
the normal form \eqref{normform}.

 More precisely,
\begin{multline}\nonumber
g_1(x,\bfy)=-{\frac {1568}{625}}\,{\frac {4\,x+1}{ \left( 16\,{x}^{2}+1 \right) {x
}^{3}}}-\frac{1}{16}\,{\frac { \left( 4\,x-1 \right)  \left( 4\,x+1 \right) ^{
2}{y_1} { y_2}  }{ \left( 16\,{x
}^{2}+1 \right) x}}\\
-\frac{1}{32}\,{\frac { \left( 4\,x+1 \right)  \left( 4\,x-
1 \right) ^{2}  { y_1}\,  ^{2}}{
 \left( 16\,{x}^{2}+1 \right) x}}-\frac{1}{32}\,{\frac { \left( 4\,x+1
 \right) ^{3}  { y_2}\, ^{2}}{ \left(
16\,{x}^{2}+1 \right) x}}\\
-{\frac { \left( 2\,x-1 \right) { y_1}
 }{ \left( 16\,{x}^{2}+1 \right) x}}+\frac{1}{2}\,{\frac {
 \left( 8\,x-1 \right) { y_2}  }{ \left( 16\,{x}^{2}
+1 \right) x}}
\end{multline}
\begin{multline}\nonumber
g_2(x,\bfy)={\frac {1568}{625}}\,{\frac {4\,x-1}{ \left( 16\,{x}^{2}+1 \right) {x}
^{3}}}+\frac{1}{16}\,{\frac { \left( 4\,x+1 \right)  \left( 4\,x-1 \right) ^{2
}{ y_1}  { y_2}  }{ \left( 16\,{x}
^{2}+1 \right) x}}\\
+\frac{1}{32}\,{\frac { \left( 4\,x-1 \right) ^{3}  {
 y_1} \, ^{2}}{ \left( 16\,{x}^{2}+1 \right)
x}}+\frac{1}{32}\,{\frac { \left( 4\,x-1 \right)  \left( 4\,x+1 \right) ^{2}
  { y_2} \,^{2}}{ \left( 16\,{x}^{2}+1
 \right) x}}\\
 -\frac{1}{2}\,{\frac { \left( 8\,x+1 \right) { y_1}  }{ \left( 16\,{x}^{2}+1 \right) x}}+{\frac { \left( 2\,x+1
 \right) { y_2}  }{ \left( 16\,{x}^{2}+1 \right) x}}
\end{multline}

  \subsection{Calculation of the functions $F_n(\xi)$}\label{CalcHn}

Substituting the two scale expansion \eqref{eq:eq51} in \eqref{eq:eqp} we obtain an asymptotic series, for $1\ll x\ll \xi$ and $F_0(\xi)\ll x$, in integer powers of $x^{-1}$, with coefficients functions of $\xi$; the first term is
$$ {\xi}^{2}\, {\frac
{d^{2}}{d{\xi}^{2}}}F_{{0}} \left( \xi \right)  +  \xi\,
  {\frac {d}{d\xi}}F_{{0}} \left( \xi \right)-\frac{1}{2}\,  F_{{0}}(\xi) ^{2} -F_{{0
}} \left( \xi \right) =O \left( {x}^{-1} \right) $$
and we look for $F_0$ analytic at $\xi=0$ and $F_0(0)=0,\, F_0'(0)=1$.

Substituting $F_0(\xi)=G_0(s)$, $s=\ln\xi$ we get $G_0''-\tfrac{1}{2}G_0^2-G_0=0$, an equation  having, as expected, Weierstrass elliptic functions as general solution, a one parameter family of rational solutions, as well as  two constant solutions: multiplying the equation by $2G_0'$ we obtain $G_0'^2=\tfrac{1}{3}G_0^3+G_0^2+Const.$ whose solution contains a term $s=\ln\xi$ unless $Const.=0$, in which case we obtain $F_0(\xi)= 12\xi/[c(1-\xi/c)^{2}]$ (degenerate elliptic) and $F_0'(0)=1$ implies the formula in \eqref{eq:f11}.

The coefficient of $x^{-1}$ gives the equation for $F_1(\xi)$:
$$\xi^2F_1''+\xi F_1'-(1+F_0)F_1=-\xi^2 F_0''$$
which shows that the only possible singularities for $F_1$ are at $\xi=0$ and $\xi=12$. Similarly, the differential equation for all $F_n$ are linear, with coefficients depending on $F_0,\ldots,F_{n-1}$, and by induction, the only possible singularities for $F_1$ are at $\xi=0$ and $\xi=12$.

To determine $F_1$ we need two constants; one is determined from the condition that $F_1$ be analytic at $0$ (thus the coefficient multiplying $\ln\xi$ must vanish), and the other constant is determined at the next step, when solving for $F_2$ (from the condition that  $F_2$ does not contain $\ln\xi$ terms). This pattern continues for all $F_n$, and is typical for generic equations.

An additional potential obstruction to $F_n$ rational occurs at $n=6$: $F_6$ also contains, in principle a  term $\ln(\xi-12)$ multiplied by a constant; this term  vanishes precisely when the coefficient of $x^{-4}$  in \eqref{eq:eqp} equals $-\tfrac{392}{625}$: any other value of this coefficient produces an equation with movable branch points, hence not having the Painlev\'e property! This is the special feature of integrability of P$_I$.

For practical calculation of $F_n$, for $n\Ge 3$ it is better to substitute $F_n(\xi)={\frac {\xi\, \left( \xi+12 \right) }{ \left( \xi-12 \right) ^{3}}}V_n(\xi)$; the functions $V_n(\xi)$ can be calculated recursively using only two successive integrations of rational functions.

\end{document}